\documentclass[preprint]{elsarticle}

\usepackage{amsmath,amssymb,amsthm}
\usepackage{mathrsfs}          
\usepackage{bm}                
\usepackage{xcolor,graphicx}
\usepackage{float}             
\usepackage{enumitem}          
\usepackage{algorithm}         
\usepackage{algpseudocode}     
\usepackage{hyperref}
\usepackage{cleveref}
\usepackage[margin=1in]{geometry}
\usepackage{booktabs}
\usepackage{tikz}
\usetikzlibrary{arrows.meta,positioning,fit,backgrounds,calc}

\algrenewcommand\algorithmicrequire{\textbf{Input:}}
\algrenewcommand\algorithmicensure{\textbf{Output:}}

\newtheorem{theorem}{Theorem}[section]
\newtheorem{lemma}[theorem]{Lemma}
\newtheorem{proposition}[theorem]{Proposition}

\newcommand{\rd}{\mathrm{d}}
\newcommand{\dd}{\,\mathrm{d}}          

\newcommand{\qb}[1]{\bigl(#1\bigr)}          
\newcommand{\qB}[1]{\Bigl(#1\Bigr)}          
\newcommand{\norm}[1]{\left\|#1\right\|}

\newcommand{\R}{\mathbb{R}}
\newcommand{\E}{\mathbb{E}}

\newcommand{\pr}{\mathscr{P}}           

\newcommand{\g}{\gamma}

\renewcommand{\a}{\alpha}
\renewcommand{\b}{\beta}

\renewcommand{\l}{\lambda}
\newcommand{\m}{\mu}

\newcommand{\Sig}{\Sigma}               

\newcommand{\Om}{\Omega}

\newcommand{\sN}{\mathcal{N}}
\newcommand{\sM}{\mathcal{M}}
\newcommand{\sT}{\mathcal{T}}
\newcommand{\sG}{\mathcal{G}}

\newcommand{\inv}{^{-1}}

\journal{arXiv}
\date{}

\begin{document}

\begin{frontmatter}

\title{\textbf{Self-supervised In-context Operator Learning
       for Stochastic Mean-Field Control}}

\author[aff1]{Suyi Gao}
\ead{gao757@purdue.edu}
\author[aff2]{Mo Zhou}
\ead{mozhou366@math.ucla.edu}
\author[aff1]{Rongjie Lai}
\ead{lairj@purdue.edu}
\address[aff1]{Department of Mathematics, Purdue University, West Lafayette, USA}
\address[aff2]{Department of Mathematics, University of California, Los Angeles, USA}

\begin{abstract}
Stochastic mean-field control (MFC) provides a fundamental framework for coordinating large populations of interacting agents under uncertainty, with applications ranging from swarm robotics and systemic-risk management to Schr\"odinger bridges and diffusion-based generative modeling. Existing numerical and deep-learning methods solve one MFC problem instance at a time and must be re-optimized whenever the task changes. In this work, we formulate stochastic MFC as an operator-learning problem and develop, to the best of our knowledge, the first mesh-free, self-supervised neural operator for stochastic MFC. The main challenge is that the diffusion term in the controlled Fokker--Planck equation precludes deterministic transport-map representations. We address this challenge by combining the probability-flow ODE with an invertible normalizing-flow-based transformer, which recasts the dynamics as a deterministic continuity equation and enables closed-form score evaluation through the exact inverse and analytical log-determinant of the normalizing flow, with $\mathcal{O}(d)$ cost per particle for networks of fixed size. Through transformer-based in-context learning, task prompts, represented by compact distribution parameters or raw particle clouds, condition the transport map, enabling a single pretrained operator to solve unseen tasks in one forward pass. The resulting \emph{Normalizing Flow Invertible Solution Transformer} (NFIST) is trained end-to-end by minimizing the stochastic control objective directly, requiring no precomputed numerical solutions for training. We further prove the consistency of the proposed operator-learning formulation with task-by-task optimization. Numerical experiments on stochastic optimal control, Schr\"odinger bridge, systemic-risk control, and obstacle-avoiding path planning demonstrate effective zero-shot generalization while substantially reducing the computational cost of solving large families of stochastic MFC problems.
\end{abstract}

\begin{keyword}
Mean-field control \sep
Stochastic Optimal Control \sep
Fokker--Planck equation \sep
In-context learning \sep
Normalizing flows \sep
Schr\"odinger Bridge \sep
Systemic Risk Model
\end{keyword}

\end{frontmatter}

\section{Introduction}\label{sec:intro}
Stochastic mean-field control (MFC) studies the optimal coordination of a large population of interacting agents whose dynamics are subject to random perturbations. Rather than optimizing the behavior of each individual agent, stochastic MFC seeks a control policy that governs the collective evolution of the population distribution. In the mean-field limit, this evolution is described by a controlled Fokker--Planck equation, leading to a PDE-constrained optimization problem over both the population density and the control field~\cite{lasry2007mfg,huang2006large,carmona2018probabilistic}. Owing to its ability to model uncertainty and large-scale interactions simultaneously, stochastic MFC has emerged as a fundamental framework in optimal control, with applications ranging from swarm and crowd navigation, where agents must reach a destination while avoiding obstacles, to systemic-risk management in financial systems, where a regulator stabilizes the reserves of interacting banks~\cite{carmona2015systemic}, and Schr\"odinger bridge problems, which characterize the most likely stochastic evolution between prescribed endpoint distributions and have recently become a cornerstone of diffusion-based generative modeling~\cite{chen2016relation,chen2021liaisons,song2021score,debortoli2021dsb}.

Despite significant recent progress, existing computational approaches remain largely instance-based. Classical numerical solvers~\cite{Benamou2000ACF,Benamou2015,yu2024fast,Beck2008FISTA} and modern deep-learning-based PDE solvers~\cite{raissi2019pinn,sirignano2018dgm,weinan2018deepritz,han2018solving,ruthotto2020machine,lin2021apacnet,zhou2025deep,zhou2026simulating,zhao2025variational} are designed to solve a single stochastic MFC problem at a time. Whenever the boundary distributions, terminal objectives, obstacle configurations, or cost parameters change, the optimization must be repeated from scratch. In many practical applications, however, one is interested not in a single optimization problem but in an entire family of related control tasks. A robotic swarm repeatedly encounters new environments, a planner must evaluate different design parameters, and a diffusion model requires Schr\"odinger bridges between many pairs of endpoint distributions. Re-solving every instance independently is therefore computationally expensive and prevents real-time deployment. This naturally motivates an operator-learning perspective: instead of approximating the solution of one optimization problem, we seek to learn a solution operator that maps an arbitrary task specification directly to its optimal control and population evolution. Once trained, such an operator can generalize to previously unseen tasks through a single forward evaluation~\cite{lu2021deeponet,li2021fno}.

Recently, a mesh-free self-supervised operator-learning framework for deterministic mean-field games was proposed in~\cite{huang2025unsupervised}, where the population evolution is represented in a Lagrangian formulation through a transformer-parameterized flow map. Their work demonstrates that learning a solution operator, rather than solving each problem instance independently, can dramatically improve computational efficiency while generalizing across families of deterministic mean-field games. Extending this paradigm to \emph{stochastic} mean-field control, however, presents a fundamental challenge. Existing mesh-free operator-learning methods naturally rely on deterministic transport maps that evolve particles according to ordinary differential equations. The diffusion term in the controlled Fokker--Planck equation destroys this deterministic transport structure, making such formulations inapplicable. Although the equivalent probability-flow formulation restores deterministic dynamics, it introduces the score function $\nabla\log p$ of the evolving density into the running cost. Since the density itself is unknown, evaluating this score generally requires density estimation or auxiliary score-learning procedures, both of which incur substantial computational overhead and additional approximation error. Developing a mesh-free operator-learning framework that overcomes these difficulties while remaining fully self-supervised therefore remains an open challenge.

In this work, we address this challenge by combining the probability-flow formulation of stochastic dynamics, invertible normalizing flows, and transformer-based in-context operator learning into a unified framework. The probability-flow ODE~\cite{anderson1982reverse,song2021score,boffi2023probability} reformulates stochastic MFC as an equivalent deterministic optimal control problem governed by a continuity equation~\cite{zhou2025score}. We parameterize the associated transport map using an invertible normalizing flow, whose exact inverse and analytical log-determinant provide closed-form evaluation of the score function through the change-of-variables formula, thereby eliminating density estimation, auxiliary score networks, and numerical inversion. To generalize across tasks, we represent task descriptions—including boundary distributions, obstacle configurations, and control parameters—as prompts that condition the transport map through transformer cross-attention. The resulting architecture, termed the \emph{Normalizing Flow Invertible Solution Transformer} (NFIST), is trained end-to-end by minimizing the expected stochastic control objective over a distribution of tasks in a fully self-supervised manner. At inference time, solving a new stochastic MFC problem requires only encoding its task description and performing a single forward pass through the learned operator.

To the best of our knowledge, this is the first work that formulates stochastic mean-field control as an operator-learning problem and develops a self-supervised neural operator capable of generalizing across families of stochastic MFC tasks. By combining probability-flow dynamics, exact normalizing-flow representations, and prompt-based in-context operator learning, our approach enables efficient zero-shot prediction across broad families of stochastic control problems without task-specific retraining.

\paragraph{Contributions} The main contributions of this paper are summarized as follows.

\begin{enumerate}
    \item \emph{Operator-learning formulation.} We formulate stochastic mean-field control as an operator-learning problem in which each task is specified by probability measures together with Euclidean parameters, and the objective is to learn a solution operator mapping task descriptions to optimal probability flows. The operator is trained entirely in a self-supervised manner by minimizing the expected stochastic control objective over a task distribution. We further prove that this learning objective is statistically consistent with solving individual stochastic MFC problems: under a sufficiently expressive operator class, minimizing the expected objective recovers the optimal solution for almost every task.

    \item \emph{Methodology.} We introduce the \emph{Normalizing Flow Invertible Solution Transformer} (NFIST), which integrates probability-flow dynamics, invertible normalizing flows, and transformer-based prompt conditioning into a unified neural operator. The proposed architecture evaluates every component of the stochastic control objective analytically, requiring neither density estimation, auxiliary score networks, nor numerical inversion. Exact inverse mappings and analytical log-determinants yield efficient   closed-form score evaluation with $\mathcal{O}(d)$ cost per particle for networks of fixed size. NFIST enables zero-shot adaptation to both parametric and nonparametric task descriptions.
    
    \item \emph{Applications and empirical validation.} We demonstrate that a single pretrained NFIST operator generalizes across diverse stochastic MFC problems, including stochastic optimal control, Schr\"odinger bridge problems, systemic-risk control, and obstacle-avoiding path planning. The learned operator achieves high accuracy on unseen tasks without supervision from ground-truth trajectories and further generalizes across continuously varying problem parameters through prompt conditioning, enabling efficient solution of entire families of stochastic control problems using a single pretrained model.
\end{enumerate}

\subsection{Related Work}\label{sec:related}

\paragraph{Operator learning}
DeepONet~\cite{lu2021deeponet} and the Fourier neural operator~\cite{li2021fno} popularized modern neural operator learning, where a neural network approximates mappings between infinite-dimensional function spaces from supervised input--output pairs. In-context operator learning~\cite{yang2023icon} extends this paradigm by conditioning the operator on task-specific prompts, enabling a single model to generalize across families of related operators. More recently, this idea has been generalized to operators on probability measures~\cite{cole2026incontextoperatorlearningspace}. Operator learning has also begun to emerge for mean-field control and games, including deterministic mean-field games~\cite{huang2025unsupervised}, finite-state mean-field games~\cite{hofgard2026operator}, and linear--quadratic mean-field games~\cite{firoozi2025simultaneously}. Our work extends this line to stochastic mean-field control, where the second-order Fokker--Planck equation introduces the score of the evolving density as the main computational challenge. Our work is different from ~\cite{garg2022icl}, which prompt the network  with solved input--output demonstrations. Instead, we prompt it directly with the task description, represented by particle clouds of the task measures together with cost parameters. This sample-based conditioning is closely related to set-encoder and conditional-neural-process architectures~\cite{garnelo2018cnp,zaheer2017deepsets,lee2019settransformer}.

\paragraph{Deep solvers for mean-field control and stochastic control}
Mean-field games and control were introduced in~\cite{lasry2007mfg,huang2006large}; see~\cite{carmona2018probabilistic} for the probabilistic theory and~\cite{RuimengHu2024Recent} for a recent survey of machine learning methods. Classical grid-based approaches include variational and proximal algorithms based on the Benamou--Brenier formulation~\cite{Benamou2000ACF,Benamou2015,yu2024fast,Beck2008FISTA}. In higher dimensions, neural network based approaches for solving single instance of the problem include physics-informed methods~\cite{raissi2019pinn,sirignano2018dgm,weinan2018deepritz}, deep BSDE solvers~\cite{han2018solving}, Lagrangian frameworks~\cite{ruthotto2020machine}, GAN-style training~\cite{lin2021apacnet}, normalizing-flow parameterizations~\cite{huang2023bridging}, particle-based flow matching~\cite{yu2026high} and actor--critic methods for stochastic optimal control~\cite{zhou2021actor,zhou2024actor}.  
Among these, the score-based neural ODE framework of~\cite{zhou2025score} is most closely related to our work, as it also adopts the probability-flow reformulation of stochastic MFC. However, the score is obtained by integrating an auxiliary score ODE along particle trajectories. In contrast, our normalizing-flow parameterization computes the score analytically through the change-of-variables formula, eliminating auxiliary score dynamics altogether. Moreover, whereas existing methods solve one stochastic MFC problem at a time, our objective is to learn a single solution operator that generalizes across an entire distribution of tasks.

\paragraph{Score-based generative models, Schr\"odinger bridges, and flow matching}
Diffusion models generate samples by learning the score of a data distribution and evolving either the reverse-time SDE or the equivalent probability flow ODE~\cite{anderson1982reverse,song2021score}. The Schr\"odinger bridge connects stochastic control with entropic optimal transport~\cite{chen2016relation,chen2021liaisons}; neural approaches include iterative proportional fitting~\cite{debortoli2021dsb}, bridge matching~\cite{shi2023dsbm}, and the Gaussian closed-form solution~\cite{bunne2023schrodinger}. Closely related, flow matching~\cite{lipman2023flow,chen2024flow} and consistency-based models~\cite{kim2023ctm,boffi2024flowmap} learn probability flows by regressing velocity fields or transport maps from data. These methods share our probability-flow viewpoint but differ fundamentally in their learning objective. They estimate scores or velocity fields from samples of a target distribution, whereas our setting has no data distribution to regress toward. Instead, the transport map is optimized directly through the stochastic control objective, and the score is recovered analytically from the normalizing flow via the change-of-variables formula, eliminating score matching or auxiliary score dynamics altogether.

\paragraph{Normalizing flows and coupling architectures}
Normalizing flows provide invertible parameterizations of probability distributions through the change-of-variables formula~\cite{rezende2015flow,papamakarios2021nf,Kobyzev2021}. Among them, coupling-based architectures~\cite{dinh2014nice,dinh2017realnvp} are particularly attractive because they admit exact inverses and analytic log-determinants, while monotone rational-quadratic spline flows~\cite{durkan2019nsf} extend this framework to more expressive one-dimensional transformations. The Jet architecture~\cite{kolesnikov2024jet} combines coupling flows with transformer conditioning for image generation. Our architecture adopts a similar combination but for a different purpose: cross-attention conditions the flow on the MFC task, while self-attention captures interactions among particles, producing a task-conditioned family of transport maps rather than a density model for a single distribution.

\paragraph{Notations}\label{sec:notation}

We use the dot notation for time derivatives: $\partial_t a(x,t) := \frac{\partial a}{\partial t}(x,t)$. All gradient-type operators act spatially unless stated otherwise: $\nabla a := \nabla_x a$,  $\boldsymbol{\nabla} b := \boldsymbol{\nabla}_x\vec b$ (Jacobian matrix), $\nabla\cdot a := \nabla_x\cdot a$ (divergence), $\Delta a := \Delta_x a$ (Laplacian). We write $\sN(\m,\Sig)$ for a Gaussian with mean $\m$ and covariance $\Sig$, $\pr(\R^d)$ for the space of Borel probability measures on $\R^d$, and $\E_{x\sim P}[\cdot]$ for expectation under the measure $P$. For any probability distribution $P$ with density function $p$, we abuse the notation to write the density function of $G_\# P$ as $G_\# p$, where $G$ is a mapping between the ground space of $P$ and $G_\# P$. 

The rest of the paper is organized as follows. Section~\ref{sec:background} reviews stochastic mean-field control, the probability flow formulation, and normalizing flows. Section~\ref{sec:sol} formulates stochastic MFC as an operator-learning problem and presents the proposed NFIST architecture. Section~\ref{sec:results} evaluates the method on stochastic mean-field control benchmarks. Finally, Section~\ref{sec:conclusion} concludes the paper and discusses future directions.

\section{Background}\label{sec:background}
\subsection{Problem Setup: Stochastic Mean-Field Control Problems}\label{sec:mfc}

We consider the controlled state process
\begin{equation}
\dd X_t = v(X_t,t)\dd t + \sqrt{2\g} \dd W_t  \label{equ:sde}
\end{equation}
for a representative agent, where $\g>0$ and $W_t$ is a standard $d$-dimensional Wiener process. In a macroscopic perspective, the population is characterized by a density $p(\cdot,t)$ satisfying the Fokker--Planck (FP) equation. The corresponding stochastic MFC problem seeks an optimal velocity field $v$ that minimizes a global cost functional over a finite horizon $T$:
\begin{align}
    \inf_{p,v}  J(p,v)
    &:= \int_0^T\!\int_\Om L\qb{x,v(x,t)}p(x,t) \dd x\dd t
    + \int_0^T I\qb{p(\cdot,t)}\dd t
    + M\qb{p(\cdot,T)},  \label{equ:mfc_noise} \\
    \text{s.t.}\quad&
    \partial_t p(x,t) + \nabla\!\cdot\!(p(x,t)v(x,t)) = \g\Delta p(x,t),\quad
    p(\cdot,0) = p_0. \label{equ:fp}
\end{align}

Rather than solving~\eqref{equ:mfc_noise} separately for each instance, we learn a solution operator that maps a task specification directly to its optimal population dynamics. Trained over a family of MFC problems, the operator generalizes to previously unseen tasks through a single forward pass.

We remark that the MFC problem~\eqref{equ:mfc_noise} models a centralized planner that optimizes a global population objective. In contrast, a Mean-Field Game (MFG) describes non-cooperative agents, seeking a Nash equilibrium. The two formulations generally differ, and we focus on MFC in this work.

\subsection{Examples}\label{sec:examples}
\paragraph{Example: Stochastic Optimal Control Problem}
In the stochastic optimal control problem, we set $L(x,v)=\|v\|^2$, $I(p)=0$, and $M(p(\cdot,T)) = \int_{\Omega} V(x)\,p(x,T)\,\dd x$. The Eulerian perspective in equations \eqref{equ:mfc_noise} and \eqref{equ:fp} also have the following equivalent form using the Lagrangian perspective: 
\begin{equation}
\min_v\;
\E\left[\tfrac12\int_0^1\norm{v(X_t,t)}^2\dd t+\tfrac12 V(X_1)\right],
\qquad
\rd X_t=v(X_t,t)\,\rd t+\sqrt{2\g}\,\rd W_t,
\quad X_0\sim P_0,
\end{equation}
Here, $V:\mathbb R^d\to\mathbb R$ is the potential function, which may vary from task to task. We therefore consider a family of stochastic optimal control problems parameterized by the potential function $V$. Each task is specified by $\mathcal T = V$,  sampled from a meta-distribution $\mathcal M$. We may also generalize the tasks to have random initial distributions, then the task will become $\mathcal T = (p_0,V)$.  Learning a solution operator over this task family enables zero-shot prediction for previously unseen terminal objectives, without solving the HJB equation explicitly.

\paragraph{Example: Schr\"odinger Bridge}
The Schr\"odinger bridge problem seeks the most likely stochastic evolution between two prescribed endpoint distributions relative to a reference diffusion process, or equivalently, minimizes the Kullback--Leibler (KL) divergence between the controlled and reference path measures. Within the MFC framework~\eqref{equ:mfc_noise}, the Schr\"odinger bridge problem corresponds to minimizing the kinetic energy $L(x,v)=\|v\|^2$ with $I(p)=0$ and $p(\cdot,T)=p_T$; the worked example of Section~\ref{sec:task_disc} instead uses the convention $L(x,u)=\frac12\|u\|^2$, a harmless rescaling of the cost that we state only to fix constants. In practice, enforcing this hard terminal constraint is often inconvenient during training. We therefore replace it by the soft penalty $M(p(\cdot,T)) = \lambda_{\mathrm{KL}} \mathrm{KL}\bigl(p(\cdot,T)\,\|\,p_T\bigr)$, where $\lambda_{\mathrm{KL}}>0$ controls the trade-off between transport cost and terminal accuracy; this direction of the divergence is used because the parameterization of Section~\ref{sec:sol} provides the density of $p(\cdot,T)$ in closed form, so the penalty is computable without density estimation. A task is then specified by the endpoint pair $\mathcal T=(p_0,p_T)$, which follows a meta-distribution $\mathcal M$.

\paragraph{Example: Systemic Risk Model}
The systemic risk model describes the inter-bank borrowing--lending system~\cite{carmona2015systemic}. The state of log-monetary reserves are governed by the one-dimensional SDE
\begin{equation}\label{eq:Sysrisk_state}
\dd X_t = \qb{a(\bar m_t-X_t)+\alpha_t}\dd t+\sigma\dd W_t,
\qquad
\bar m_t=\E[X_t],
\end{equation}
in $1$ dimension. The social cost is
\begin{equation}\label{eq:Sysrisk_cost}
J=\E\int_0^T\qB{\tfrac12\alpha_t^2-q\,\alpha_t(\bar m_t-X_t)
+\tfrac{\varepsilon}{2}(\bar m_t-X_t)^2}\dd t
+\tfrac{c}{2}\E\qb{(X_T-\bar m_T)^2}.
\end{equation}
Writing $v(x,t)=a\qb{\bar m(p)-x}+\alpha(x,t)$ with $\bar m(p)=\int_\Omega x\,p(x)\dd x$ casts the problem into the stochastic MFC formulation~\eqref{equ:mfc_noise}. A task is specified by $\mathcal T=(p_0,\theta)$ with $\theta=(a,q,\varepsilon,c,\sigma)$ sampled from a meta-distribution $\mathcal M$.

\paragraph{Example: Obstacle Avoiding Path Planning}
The MFC framework also applies to swarm path-planning problems, where a population of agents is transported from an initial distribution to a target distribution while avoiding prescribed obstacles. Within the stochastic MFC formulation~\eqref{equ:mfc_noise}, $L(x,v) = \|v(x,t)\|^2$, $M(p(\cdot,T)) = \l_{\mathrm{KL}}\mathrm{KL}\qb{p(\cdot,T) \| p_T}$, and the obstacle cost is
\[
I(p(\cdot,t)) = \int_\Omega Q(x)p(x,t)\,\dd x,
\]
where $Q$ penalizes occupancy of obstacle regions. A task is specified by the tuple $\mathcal T=(p_0,p_T,Q)$ sampled from a meta-distribution $\mathcal M$, consisting of the initial distribution, target distribution, and obstacle layout.

\subsection{Probability Flow Formulation}\label{sec:change_of_variable}

The stochastic MFC problem~\eqref{equ:mfc_noise} is naturally formulated in Eulerian coordinates through the density $p$ and velocity field $v$. For computation, however, a Lagrangian representation is often preferable, since the density evolution can then be computed by transporting particles through a flow map $G(\cdot,t)$ satisfying
\[
p(\cdot,t)=G(\cdot,t)_\#p_0,
\]
avoiding the direct discretization of the Fokker--Planck equation. Such a flow map is unavailable for the stochastic dynamics~\eqref{equ:sde}, whose Brownian motion prevents a deterministic particle flow. To recover an equivalent Lagrangian formulation, we employ the probability flow ODE, which replaces the stochastic dynamics by the deterministic velocity
\begin{align}
    f(x,t):=v(x,t)-\g\nabla\log p(x,t),
\end{align}
whose continuity equation has the same time-marginal densities as the original Fokker--Planck equation.

\begin{lemma}[Probability flow ODE~\cite{anderson1982reverse}]%
\label{lem:prob_flow}
Let $p(\cdot,t)$ be the density of the process~\eqref{equ:sde} with score function $\mathbf{s}(x,t):=\nabla\log p(x,t)$. Define
\begin{align}
    f(x,t) := v(x,t) - \g \nabla\log p(x,t). \label{equ:prob_flow_f}
\end{align}
Then the ODE $\partial_t x_t = f(x_t,t)$ with $x_0\sim p_0$ has the same marginal densities $p(\cdot,t)$ as~\eqref{equ:sde} for all $t\in[0,T]$.
\end{lemma}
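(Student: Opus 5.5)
The plan is to rewrite the Fokker--Planck equation~\eqref{equ:fp} as a continuity equation with velocity $f$. Then I will use uniqueness for that continuity equation to identify the time marginals of the deterministic flow with $p(\cdot,t)$.

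\emph{Step 1 (algebraic identity).} Wherever $p>0$ and $p$ is smooth,
\[
\g\Delta p=\g\nabla\cdot(\nabla p)=\nabla\cdot\qb{p\,\g\nabla\log p}.
\]
Substituting this into~\eqref{equ:fp} gives
\[
\partial_t p+\nabla\cdot\qb{p\,(v-\g\nabla\log p)}=0 .
\]
This is exactly $\partial_t p+\nabla\cdot(p f)=0$ with $f$ as in~\eqref{equ:prob_flow_f}. So the law of $X_t$ from~\eqref{equ:sde} solves the continuity equation driven by $f$, with initial datum $p_0$.

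\emph{Step 2 (law of the ODE).} Let $\Phi(\cdot,t)$ be the flow of $\dot x_t=f(x_t,t)$ and set $q(\cdot,t)=\Phi(\cdot,t)_\#p_0$. Testing against $\varphi\in C_c^\infty(\R^d)$ gives
\[
\tfrac{\rd}{\rd t}\E[\varphi(x_t)]=\E[\nabla\varphi(x_t)\cdot f(x_t,t)] .
\]
Hence $q$ is a weak solution of the same continuity equation $\partial_t q+\nabla\cdot(qf)=0$, with $q(\cdot,0)=p_0$.

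\emph{Step 3 (uniqueness).} Now $p$ and $q$ are two weak solutions of one linear continuity equation with the same velocity field $f$, which is fixed once $p$ is fixed. I would conclude $p=q$ from uniqueness of measure-valued solutions when $f$ is locally Lipschitz in $x$, with linear growth, uniformly in $t$. The argument is a duality one: solve the backward transport equation $\partial_t\psi+f\cdot\nabla\psi=0$ with terminal data $\varphi$ along the characteristics $\Phi$, then pair it with $p-q$. Alternatively one can invoke the superposition principle of Ambrosio.

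The main obstacle is regularity: justifying that $\nabla\log p$ is well defined and locally Lipschitz, and that the ODE flow is globally defined without blow-up. I would assume, as is standard, that $v$ is smooth with bounded derivatives and that $p_0$ is smooth and positive. Parabolic regularity (the strong maximum principle and Schauder estimates) then gives $p>0$ and $p$ smooth for $t>0$, and Gaussian-type tail bounds control the growth of $\nabla\log p$. With these standing assumptions stated, Steps 1--3 complete the proof, and the statement holds for all $t\in[0,T]$.
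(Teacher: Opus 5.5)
Your proposal is correct and follows the paper's proof: the identity $\g\Delta p=\nabla\cdot(p\,\g\nabla\log p)$ turns the Fokker--Planck equation into the continuity equation $\partial_t p+\nabla\cdot(pf)=0$, and uniqueness for that linear equation with datum $p_0$ identifies the law of the ODE with $p(\cdot,t)$. Your Steps 2--3 only make explicit what the paper leaves implicit, with one caution: smoothness and positivity of $p_0$ do not by themselves give linear growth of $\nabla\log p$ uniformly in $t$ (for $p_0\propto e^{-\norm{x}^4}$ the initial score grows cubically), so either assume linear growth of $\nabla\log p_0$ or use the uniqueness/superposition result that needs only time-integrable local Lipschitz bounds on $f$ together with $\int_0^T\!\int\norm{f}\,p\,\dd x\,\dd t<\infty$.
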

We defer the proofs of this and all following results to \ref{sec:appendix_proofs}. Consequently, the stochastic MFC problem can be rewritten as a deterministic control problem for the composed velocity field $f$, with the appearance of the score $\nabla\log p$ in the running cost:
\begin{align}
    \inf_{p,f}  J(p,f)
    &:= \int_0^T\!\int_\Om
        L\qb{x,\, f(x,t)+\g\nabla\log p(x,t)}
    p(x,t)\dd x\dd t
    + \int_0^T I\qb{p(\cdot,t)}\dd t
    + M\qb{p(\cdot,T)},
    \label{equ:mfc_det}
    \\
    \text{s.t.}\quad&
    \partial_t p(x,t) + \nabla\!\cdot\!\qb{p(x,t)f(x,t)} = 0,\quad p(\cdot,0)=p_0.
    \nonumber
\end{align}

The first-order continuity equation admits a Lagrangian representation through a flow map. Instead of parameterizing the velocity field $f$ directly, we parameterize the transport map $G$, which simultaneously determines the density evolution and the agent trajectories. Let
\begin{align}\label{equ:pt_G}
    \partial_t G(x,t)=f\bigl(G(x,t),t\bigr),
    \qquad
    G(x,0)=x.
\end{align}
Then the density is given by the pushforward
\begin{align}
    p(\cdot,t)=G(\cdot,t)_\# p_0.
    \label{equ:flow_map}
\end{align}
Substituting this representation into~\eqref{equ:mfc_det} yields an unconstrained optimization problem over the flow map:
\begin{align}
    \inf_{G}\, J(G;\mathcal{T})
    &:= \int_0^T\!\int_\Om L\qb{G(x,t),\, \partial_t G(x,t) + \g\, \mathbf{s}\qb{G(x,t),t}} p_0(x)\dd x\dd t \notag\\
    &\qquad + \int_0^T I\qb{G(\cdot,t)_\# p_0}\dd t
    + M\qb{G(\cdot,T)_\# p_0},
    \label{equ:lagrangian_cost}
\end{align}
where $\mathbf{s}=\nabla\log p = \nabla \log G(\cdot,t)_\# p_0$ denotes the score function. Here $\mathcal{T}$ is the representation of the tasks, which determines the function $L$, $I$, and $M$. Examples of $\mathcal{T}$ are provided in section~\ref{sec:examples}. This formulation is mesh-free and naturally compatible with particle-based Monte Carlo methods. Moreover, once the flow map $G$ is known, both the density and the score can be recovered through the change-of-variables formula.

\begin{lemma}[Score from pushforward map~\cite{Kobyzev2021}]
\label{lem:score_pushforward}
Let $G(\cdot,t):\R^d\to\R^d$ be a diffeomorphism satisfying $G(\cdot,t)_\# p_0=p(\cdot,t)$. Then, the score at $x_t=G(x,t)$ is
\begin{align}
    \mathbf{s}(x_t,t) = \boldsymbol{\nabla}_x G(x,t)^{-\top}
    \left(\mathbf{s}_0(x)-\nabla_x\log\left|\det\boldsymbol{\nabla}_xG(x,t)\right|\right),
    \label{equ:score_compute_jacobian}
\end{align}
where $\mathbf{s}_0(x)=\nabla\log p_0(x)$.
\end{lemma}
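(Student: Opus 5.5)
The plan is to start from the change-of-variables formula for the pushforward density and differentiate it in $x$ using the chain rule. Write $J(x):=\boldsymbol{\nabla}_x G(x,t)$. Since $G(\cdot,t)$ is a diffeomorphism pushing $p_0$ to $p(\cdot,t)$, the density satisfies
\begin{align*}
p\qb{G(x,t),t}\,\left|\det J(x)\right| = p_0(x)
\end{align*}
for every $x\in\R^d$. Both sides are positive wherever $p_0>0$, and we assume this holds on the support, where the score is defined. Taking logarithms gives
\begin{align*}
\log p\qb{G(x,t),t} = \log p_0(x) - \log\left|\det J(x)\right|.
\end{align*}

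Next, take the gradient with respect to $x$ of both sides. The right-hand side gives $\mathbf{s}_0(x)-\nabla_x\log|\det J(x)|$. On the left-hand side, the map $x\mapsto \log p(G(x,t),t)$ is a composition, so by the chain rule its gradient is $J(x)^{\top}\,\nabla\log p(x_t,t) = J(x)^{\top}\mathbf{s}(x_t,t)$, with $x_t=G(x,t)$. This is the one place where care is needed: the convention that $J$ is the Jacobian with rows indexed by output components is exactly what produces the transpose. We therefore obtain
\begin{align*}
J(x)^{\top}\mathbf{s}(x_t,t) = \mathbf{s}_0(x)-\nabla_x\log\left|\det J(x)\right|.
\end{align*}

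Finally, because $G(\cdot,t)$ is a diffeomorphism, $J(x)$ is invertible, and so is $J(x)^{\top}$. Multiplying on the left by $J(x)^{-\top}$ yields \eqref{equ:score_compute_jacobian}.

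The main obstacle is not the algebra but the regularity needed for each step:
\begin{enumerate}[label=(\roman*)]
\item $G(\cdot,t)$ must be $C^2$, so that $\log|\det J|$ is differentiable;
\item $p_0$ must be $C^1$ and positive, so that $\mathbf{s}_0$ exists;
\item $\det J$ must be nonvanishing, which a diffeomorphism guarantees.
\end{enumerate}
I would state these as standing assumptions, matching the normalizing-flow setting. Under them the argument above is complete.
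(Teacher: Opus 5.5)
Your proposal is correct and follows essentially the paper's proof: take logarithms of the change-of-variables identity $p_0(x)=p(G(x,t),t)\,|\det\boldsymbol{\nabla}_xG(x,t)|$, differentiate in $x$ so the chain rule produces $\boldsymbol{\nabla}_xG(x,t)^{\top}\mathbf{s}(x_t,t)$, and invert the Jacobian, which the diffeomorphism hypothesis allows. The regularity conditions you list are left implicit in the paper, and they are exactly what the argument uses; the differentiability of $p(\cdot,t)$ at $x_t$ follows from the same identity written in terms of $G(\cdot,t)^{-1}$.
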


Lemma~\ref{lem:score_pushforward} expresses the score in terms of the Jacobian of the flow map. Directly evaluating~\eqref{equ:score_compute_jacobian} requires the Jacobian $\boldsymbol{\nabla}_x G$, its log-determinant, and solving a linear system involving its transpose. For a generic flow map, explicitly forming and manipulating the Jacobian incurs at least $\mathcal{O}(d^2)$ memory and $\mathcal{O}(d^3)$ computational cost per particle, making score evaluation expensive in high dimensions. The NFIST architecture introduced in Section~\ref{sec:arch} exploits an invertible normalizing-flow construction to avoid these costs. In particular, the coupling structure renders the Jacobian of each layer block triangular with a diagonal transformed block, so the log-determinant is available analytically without matrix factorization. Moreover, the score is computed by a single reverse-mode sweep through the inverse network, without explicitly forming the Jacobian, yielding $\mathcal{O}(d)$ computational cost per particle for a network of fixed hidden width and depth.

\subsection{Recovering the SDE Trajectories from the Probability Flow}\label{sec:sde_recovery}
The probability flow ODE preserves the time marginals of~\eqref{equ:sde} but replaces the stochastic agent paths by deterministic ones: an individual ODE trajectory $t\mapsto G(x,t)$ is not a sample path of the controlled diffusion. The stochastic trajectories are recoverable from the flow map alone. Inverting~\eqref{equ:prob_flow_f} for the control velocity gives
\begin{align}
    v(x,t) = f(x,t) + \g\, \mathbf{s}(x,t),
    \qquad
    f(x,t) = \partial_t G\qb{G\inv(x,t),t},
    \label{equ:v_from_G}
\end{align}
with the score $s$ available in closed form through Lemma~\ref{lem:score_pushforward}. Substituting~\eqref{equ:v_from_G} into~\eqref{equ:sde} yields
\begin{align}
    \dd X_t = \qb{f(X_t,t) + \g\, \mathbf{s}(X_t,t)}\dd t + \sqrt{2\g}\,\dd W_t,
    \qquad X_0\sim P_0,
    \label{equ:sde_recovered}
\end{align}
which is exactly the controlled diffusion~\eqref{equ:sde}: its sample paths are distributed as the original stochastic agent trajectories, not merely marginal-matched. In practice we integrate~\eqref{equ:sde_recovered} by the Euler--Maruyama scheme
\begin{align*}
    X_{t+\Delta t} = X_t + \qb{f(X_t,t)+\g\, \mathbf{s}(X_t,t)}\Delta t + \sqrt{2\g\,\Delta t}\;\xi_t,
    \qquad \xi_t\sim\sN(0,I),
\end{align*}
where each drift evaluation uses only quantities the flow map already provides: one inverse pass for $G\inv$ and the score, and one time derivative of $G$ for $f$. The deterministic parameterization therefore loses no information, the flow map transports the density, and~\eqref{equ:sde_recovered} restores the stochastic trajectories whenever they are required.

\section{Solution Operator Learning}\label{sec:sol}

Classical numerical solvers and most deep-learning PDE methods, including physics-informed neural networks~\cite{raissi2019pinn,sirignano2018dgm} and the Deep Ritz method~\cite{weinan2018deepritz}, solve one problem instance at a time. Consequently, every new choice of the initial distribution, terminal objective, or constraints requires solving the optimization problem again. Operator learning instead approximates the map from problem data to solution function~\cite{lu2021deeponet,li2021fno}, reducing the computational cost over a family of problems so that unseen instances are solved by a single forward pass.

For operator learning, each MFC problem is regarded as a \emph{task} consisting of finitely many probability measures and Euclidean parameters,
\begin{align}
    \sT=(P_1,\ldots,P_m,\theta)
    \in
    \mathfrak T:=\pr(\R^d)^m\times\R^k,
    \label{equ:task_space}
\end{align}
where $P_1,\ldots,P_m$ denote the measure-valued inputs and $\theta\in\R^k$ collects the scalar parameters. The precise choice of these inputs depends on the application: for example, $\sT=(P_0,b)$ for stochastic optimal control, $\sT=(P_0,P_T)$ for the Schr\"odinger bridge, $\sT=(P_0,\theta)$ for the systemic risk model, and $\sT=(P_0,P_T,Q)$ for obstacle avoidance. The corresponding \emph{solution operator} maps each task to its optimal flow map,
\begin{align}
    \sG:\mathfrak T
    \longrightarrow
    C^{2,1}(\R^d\times[0,T];\R^d),
    \qquad
    \sG[\sT]=G.
    \label{equ:sol_op_def}
\end{align}
$G$ satisfies the pushforward relation $G(\cdot,t)_{\#}P_0 = P(\cdot,t)$, giving the constraint of continuity-equation in~\eqref{equ:mfc_det}.

Next, Section~\ref{sec:op_mfc} formulates the operator-learning objective and establishes its consistency with task-by-task optimization. Section~\ref{sec:task_disc} introduces the tokenization of task descriptions into a finite set of prompts and illustrates the construction with a representative example. Finally, Section~\ref{sec:arch} presents the proposed neural architecture that realizes the proposed solution operator.

\subsection{Operator learning for MFC}\label{sec:op_mfc}

Let $\sM\in\pr(\mathfrak T)$ be a meta distribution of MFC tasks. We learn a solution operator $\sG$ by minimizing the expected task objective
\begin{align}
\min_{\sG}\;
\E_{\sT\sim\sM}
J\bigl(\sG[\sT];\sT\bigr),
\label{equ:sol_op_obj}
\end{align}
where $J(\cdot;\sT)$ is the objective~\eqref{equ:lagrangian_cost} associated with task $\sT$. Since the predicted flow map \(G=\sG[\sT]\) determines both the density \(p(\cdot,t)=G(\cdot,t)_\#p_0\) and the velocity \(\partial_t G\), the stochastic MFC objective can be evaluated directly from the network output without solving any auxiliary PDE. The neural operator therefore serves as a surrogate model for the underlying variational optimization problem rather than for its numerical solutions. Training is fully self-supervised: the stochastic control objective itself provides the loss, so no optimal trajectories, value functions, or precomputed numerical solutions are required as training data. Consequently, our approach avoids the substantial offline computational cost of generating large collections of numerical solutions required by supervised operator-learning methods, requiring only samples of tasks drawn from \(\sM\).

The operator is realized through \emph{in-context} learning~\cite{xie2022an}. Each task is encoded as a prompt, namely a finite sequence of context tokens representing the task description \(\sT\) (Section~\ref{sec:task_disc}), which conditions a single neural network with shared parameters through transformer attention. During inference, the network parameters remain fixed, while different task prompts produce different flow maps, enabling the same pretrained model to adapt to and solve previously unseen tasks without retraining.

\begin{proposition}[Consistency with per-task training]
\label{prop:consistency}

Assume that, for $\sM$-almost every task $\sT$, the corresponding single-task problem $\min_G J(G;\sT)$ admits an optimal solution $G_\sT^*$. Suppose further that 
$$\E_{\sT\sim\sM} J(G_\sT^*;\sT) < \infty.$$
Then the minimum possible operator-learning objective equals the expected optimum of the individual tasks:
\begin{align}
\min_{\sG}
\E_{\sT\sim\sM}
J(\sG[\sT];\sT)
=
\E_{\sT\sim\sM}
J(G_\sT^*;\sT).
\label{equ:consistency}
\end{align}
Moreover, an operator $\sG$ is optimal if and only if $\sG[\sT]\in \arg\min_G J(G;\sT)$ for $\sM$-almost every task $\sT$.
\end{proposition}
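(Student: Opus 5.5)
The argument is a pointwise-minimization (interchange of infimum and expectation) argument. Write $J^*(\sT):=J(G_\sT^*;\sT)=\min_G J(G;\sT)$, defined for $\sM$-almost every task. Throughout, I treat the admissible operators $\sG$ as maps $\mathfrak T\to C^{2,1}(\R^d\times[0,T];\R^d)$ for which $\sT\mapsto J(\sG[\sT];\sT)$ is measurable, so that the expectation in \eqref{equ:sol_op_obj} makes sense. I also assume $J(\cdot;\sT)$ is bounded below, say by a uniform constant or an integrable function, so expectations are well defined in $(-\infty,\infty]$. This holds in all examples of Section~\ref{sec:examples}, where $L\ge 0$, $I\ge 0$ and $M\ge0$ up to constants.

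\emph{Lower bound.} For any admissible $\sG$ and $\sM$-a.e.\ $\sT$, the function $\sG[\sT]$ is a feasible flow map for task $\sT$. Hence $J(\sG[\sT];\sT)\ge J^*(\sT)$ pointwise, and by monotonicity of the integral
\[
\E_{\sT\sim\sM}J(\sG[\sT];\sT)\ge \E_{\sT\sim\sM}J^*(\sT).
\]
The right-hand side is finite by hypothesis.

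\emph{Attainment.} Define the ``oracle'' operator $\sG^*[\sT]:=G_\sT^*$ for $\sM$-a.e.\ $\sT$, extended arbitrarily (say by the identity map) on the null set. Then $\E J(\sG^*[\sT];\sT)=\E J^*(\sT)$, so the infimum equals this value and is attained, which gives \eqref{equ:consistency}.

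\emph{Main obstacle: measurability of $\sG^*$.} The step I expect to be delicate is showing that $\sT\mapsto J(G^*_\sT;\sT)$ is measurable, so that $\sG^*$ is an admissible operator. I would first argue that $\sT\mapsto J^*(\sT)$ is measurable, since $J$ is jointly measurable and an infimum over a separable space (e.g.\ a countable dense subset of $C^{2,1}$, using lower semicontinuity or continuity of $J(\cdot;\sT)$) is measurable. I would then invoke a measurable selection theorem (Kuratowski--Ryll-Nardzewski, or Aumann's selection theorem for $\sM$-completed $\sigma$-algebras) to choose $G_\sT^*$ measurably from the argmin set. If this is too technical, an alternative is to read the hypothesis ``admits an optimal solution $G_\sT^*$'' as including measurable dependence on $\sT$, and to note this interpretation explicitly.

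\emph{Characterization of optimal operators.} If $\sG[\sT]\in\arg\min J(\cdot;\sT)$ a.e., then $J(\sG[\sT];\sT)=J^*(\sT)$ a.e., so $\sG$ attains the minimum. Conversely, suppose $\sG$ is optimal. Then the function $h(\sT):=J(\sG[\sT];\sT)-J^*(\sT)$ is nonnegative and $\E h=0$. This subtraction is legitimate precisely because $\E J^*<\infty$, which is where the integrability hypothesis enters. A nonnegative function with zero integral vanishes $\sM$-a.e., so $\sG[\sT]$ is a minimizer for almost every task.
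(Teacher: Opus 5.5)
Your proposal is correct and follows the same pointwise-minimization argument as the paper, which takes expectations of $J(\sG[\sT];\sT)\ge J(G_\sT^*;\sT)$. You are more careful than the paper in three places: you exhibit the oracle operator to show the bound is attained, you flag measurability, and you prove the converse via a nonnegative function with zero integral, which is where $\E J^*<\infty$ is genuinely used. One small slip in your measurability aside: reducing the infimum to a countable dense subset needs upper semicontinuity (or continuity) of $J(\cdot;\sT)$, not lower semicontinuity.
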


\begin{proof}
For every task $\sT$, $J(\sG[\sT];\sT) \ge J(G_\sT^*;\sT)$. Taking expectations gives
\[
\E_{\sT\sim\sM}J(\sG[\sT];\sT)
\ge
\E_{\sT\sim\sM}J(G_\sT^*;\sT).
\]
Since this inequality holds for every operator $\sG$, taking the infimum over $\sG$ proves
\eqref{equ:consistency}. Equality holds when
\[
J(\sG[\sT];\sT)=J(G_\sT^*;\sT)
\]
for $\sM$-almost every task, i.e., when $\sG[\sT]$ is an optimal solution of the corresponding single-task problem.
\end{proof}

Proposition~\ref{prop:consistency} shows that operator learning is fully consistent with classical task-by-task optimization. The operator objective is simply the expectation of the individual task objectives, so minimizing it is equivalent to solving every task optimally almost surely under the task distribution. In other words, operator learning amortizes computation across tasks without changing the underlying optimization problem; the only additional challenge is whether the chosen operator class is expressive enough to approximate the optimal task-to-solution map.

\subsection{Discretization of the tasks}\label{sec:task_disc}

Each task~\eqref{equ:task_space} is represented by a finite set of tokens. Euclidean parameters $\theta\in\R^k$ are already finite-dimensional and are used directly as tokens. Examples include the well location $b$ in the stochastic optimal control problem and the cost-weight vector $\theta=(a,q,\varepsilon,c,\sigma)$ in the systemic-risk example. 

Each measure-valued input $P\in\pr(\R^d)$ is represented in one of two ways.

\paragraph{Parametric representation} 
When $P$ belongs to a known family, its parameters are used directly as tokens. For example, a Gaussian distribution $\mathcal N(\mu,\Sigma)$ is encoded by
\begin{align}
    \bigl[\mu\,;\,\operatorname{vech}(\operatorname{chol}\Sigma)\bigr]
    \in
    \R^{d+d(d+1)/2},
    \label{equ:gauss_token}
\end{align}
consisting of the mean and the lower triangular part of the Cholesky factorization of the covariance, and a Gaussian mixture is represented by one token per component.

\paragraph{Non-parametric representation} 
When only samples are available, an i.i.d.\ particle cloud
\[
X=(x^1,\ldots,x^{N_P})\sim P
\]
is used as the prompt, with each particle forming one token.

The two representations trade flexibility against efficiency. Particle clouds require no distributional assumptions and naturally accommodate arbitrary measures, but increase the attention cost with the number of particles and introduce sampling error. Parametric tokens are compact and deterministic, but are limited to the chosen family. In both cases, the resulting token sequence serves as the prompt that conditions the solution operator.

\paragraph{Example: single-well stochastic optimal control}
Consider the task family $\sT=(P_0,b)$, where $P_0=\mathcal N(\mu_0,\Sigma_0)$ and $b\in\R^d$ is the center of the quadratic terminal potential. The objective is
\begin{align}
J(G;\sT)
&=
\frac12
\int_0^T
\E_{x\sim P_0}
\Bigl\|
\partial_tG(x,t)+\gamma \mathbf{s}(G(x,t),t)
\Bigr\|^2dt
+
\frac12
\E_{x\sim P_0}
\|G(x,T)-b\|^2,
\label{equ:soc_functional}
\end{align}
where $s(\cdot,t)$ is the score of the pushforward density $G(\cdot,t)_\#P_0$, computed analytically from $G$ by Lemma~\ref{lem:score_pushforward}.

During training, the expectations are approximated by Monte Carlo and the time integral by a uniform discretization. Given a batch of tasks $\{\sT_k=(P_0^k,b_k)\}_{k=1}^B$, let $x_j^{k,i}=G_k(x^{k,i},t_j)$, $G_k=\sG[\sT_k]$, where $x^{k,i}\sim P_0^k$ and $t_j=jT/n_t$. Replacing $\partial_tG$ by finite differences gives the empirical objective
\begin{align}
\hat J
&=
\frac1B
\sum_{k=1}^{B}
\left[
\frac{\Delta t}{2N}
\sum_{j=0}^{n_t-1}
\sum_{i=1}^{N}
\left\|
\frac{x^{k,i}_{j+1}-x^{k,i}_{j}}{\Delta t}
+\gamma\hat{\mathbf{s}}_{j+1/2}^{k,i}
\right\|^2
+
\frac1{2N}
\sum_{i=1}^{N}
\|x_{n_t}^{k,i}-b_k\|^2
\right],
\label{equ:soc_empirical}\\
\hat{\mathbf{s}}^{\,k,i}_{j+1/2}
    \;&:=\;
    \frac12\,\qB{\mathbf{s}_k\qb{x^{k,i}_{j},\,t_j}+\mathbf{s}_k\qb{x^{k,i}_{j+1},\,t_{j+1}}},
    \notag
\end{align}
where $\mathbf{s}_k\qb{x^{k,i}_{j},\,t_j}$ denotes the numerical evaluation of the score function, obtained using an inverse form of Lemma~\ref{lem:score_pushforward}. Specifically, taking $\nabla_x\log$ of the change-of-variables formula 
\[
    p^k(x,t)=p^k_0\qb{G_k\inv(x,t)}\bigl|\det\boldsymbol{\nabla}_xG_k\inv(x,t)\bigr|
    \ ,
\]
we obtain
\begin{align}
\mathbf{s}_k\qb{x^{k,i}_j,t_j} \;=\; \nabla_x^{\mathrm{autodiff}} \left[ \log p^k_0\Bigl(G_k\inv\qb{x,t_j}\Bigr) + \log\Bigl| \det\boldsymbol{\nabla}_x G_k\inv\qb{x,t_j} \Bigr| \right]_{\,x=x^{k,i}_j}, \label{equ:score_discrete}
\end{align}

The task is encoded using either of the two prompt representations introduced above. In the parametric setting, the prompt consists of the Gaussian token $\bigl[\mu_0;\operatorname{vech}(\operatorname{chol}\Sigma_0)\bigr]$ representing the initial distribution $P_0$, together with the Euclidean parameter $b$ specifying the well center. In the non-parametric setting, these finite-dimensional descriptors are replaced by particle-cloud representations: an i.i.d.\ sample $X_0=(x_0^1,\ldots,x_0^{N_0})\sim P_0$ represents the initial distribution, while a second particle cloud sampled from an isotropic Gaussian centered at $b$ represents the terminal potential. Apart from this change in task representation, the network architecture and the empirical objective~\eqref{equ:soc_empirical} are unchanged. For the non-parametric operator, we additionally regularize the variance, across transported particles, of the terminal log-determinant $\log\bigl|\det\boldsymbol{\nabla}_xG(x,T)\bigr|$. This penalty discourages excessive particle-to-particle variation in the local volume change induced by the learned transport map and stabilizes the resulting empirical density.

The task is encoded in either of the two prompt representations introduced above. The parametric prompt consists of the Gaussian token $\bigl[\mu_0;\operatorname{vech}(\operatorname{chol}\Sigma_0)\bigr]$ together with the Euclidean parameter $b$, while the non-parametric prompt includes i.i.d.\ particle clouds, $X_0=(x_0^1,\ldots,x_0^{N_0})\sim P_0$ for the initial law and a cloud drawn from an isotropic Gaussian centered at the well $b$. 
The network architecture and the objective~\eqref{equ:soc_empirical} are identical in both cases; except the non-parametric operator additionally penalizes the variance across particles of the terminal log-determinant,
\begin{align}
\hat J_{\rm ld}
\;=\;
\frac{\lambda_{\rm ld}}{B}\sum_{k=1}^{B}
\operatorname{Var}_{i\le N}
\Bigl[\,
\log\bigl|\det\boldsymbol{\nabla}_xG_k\qb{x^{k,i},T}\bigr|
\,\Bigr],
\label{equ:ld_penalty}
\end{align}
added to~\eqref{equ:soc_empirical}, where $\operatorname{Var}_{i\le N}$ denotes the empirical variance over the $N$ particles of a task and $\lambda_{\rm ld}>0$. A uniform volume change has a constant log-determinant across particles and has no penalty; the term discourages only spatially excessive per-particle expansion, the mechanism by which the flow could inflate the empirical density particle by particle to lower the Monte-Carlo estimate of the density-dependent terms.

Algorithm~\ref{alg:soc} summarizes the resulting self-supervised training procedure. At each iteration, we sample a fresh batch of tasks together with an independent particle cloud for each task.

\begin{algorithm}[!htbp]
\caption{Self-supervised training of the solution operator on the single-well stochastic optimal control family $\sT=(p_0,V)$.}
\label{alg:soc}
\begin{algorithmic}[1]
    \Require task distribution $\sM$; diffusion $\gamma$; horizon $T$; uniform grid $t_j=jT/n_t$, $j=0,\ldots,n_t$ ($n_t+1$ nodes), with $\Delta t=T/n_t$; tasks per batch $B$; particles per task $N$; learning rate $\eta$; iteration budget
    $n_{\rm iter}$
    \Ensure parameters $w$ of the solution operator $\sG_w$
    \State initialize $w$
    \For{$\mathrm{iter}=1,\ldots,n_{\rm iter}$}
      \State draw $\sT_k=(P_0^k,b_k)\sim\sM$ and $x^{k,i}\sim P_0^k$ i.i.d., $k\le B$, $i\le N$
             \Comment{fresh tasks and particles}
      \State $G_k\leftarrow\sG_w[\sT_k]$
             \Comment{the prompt is encoded once per task}
      \State $x^{k,i}_j\leftarrow G_k\qb{x^{k,i},t_j}$, \; $j=0,\ldots,n_t$
             \Comment{$x^{k,i}_0=x^{k,i}$}
      \State $\mathbf{s}_k\qb{x,t_j}\leftarrow\nabla_x\Bigl[\log p_0^k\qb{G_k\inv\qb{x,t_j}}
             +\log\bigl|\det\boldsymbol{\nabla}_xG_k\inv\qb{x,t_j}\bigr|\Bigr]$
             evaluated at $x=x^{k,i}_j$
             \Comment{Lemma~\ref{lem:score_pushforward}}
      \State $\hat{\mathbf{s}}^{\,k,i}_{j+1/2}\leftarrow\tfrac12
             \qB{\mathbf{s}_k\qb{x^{k,i}_j,t_j}+\mathbf{s}_k\qb{x^{k,i}_{j+1},t_{j+1}}}$, \; $j<n_t$
      \State $\hat J\leftarrow$ the empirical objective~\eqref{equ:soc_empirical}
      \State $g\leftarrow\nabla_w\hat J$;\quad $g\leftarrow g\cdot\min\{1,\,1/\norm{g}\}$
             \Comment{gradient-norm clipping}
      \State $w\leftarrow\mathrm{AdamW}(w,g,\eta)$
      \State periodically re-estimate $\hat J$ on fresh task batches at fixed $w$, and reduce
             $\eta$ when it stalls
    \EndFor
    \State \Return $w$
\end{algorithmic}
\end{algorithm}

The other three problem classes are trained using the same procedure, except that the cost assembled from them changes. For the Schr"odinger bridge, the quadratic terminal cost is replaced by the Kullback--Leibler terminal penalty introduced in Section~\ref{sec:background}; the corresponding experiments are presented in Section~\ref{sec:exp_sb}. For the systemic-risk model, the control is recovered from the probability-flow velocity as $\alpha=\partial_tG+\gamma s-a(\bar m-x)$, see Section~\ref{sec:exp_sr1}. The obstacle-avoiding path-planning problem combines the Kullback--Leibler terminal penalty with the running obstacle-occupancy cost and an entropic collision-avoidance term; the corresponding results are reported in Section~\ref{sec:exp_pp_task}.

\subsection{Architecture: Normalizing Flow Invertible Solution Transformer}\label{sec:arch}

The solution operator must satisfy three requirements: (i) evaluate the flow map $G(\cdot,t)$ at arbitrary time with $G(\cdot,0)=\mathrm{id}$; (ii) admit an efficient inverse $G^{-1}(\cdot,t)$; and (iii) provide the Jacobian log-determinant analytically for score evaluation (see \eqref{equ:score_discrete}). 
To meet these requirements simultaneously, we introduce the \emph{Normalizing Flow Invertible Solution Transformer (NFIST)}, a task-conditioned invertible architecture designed specifically for operator learning in stochastic MFC. NFIST combines a time-conditioned normalizing flow built from affine coupling layers~\cite{dinh2017realnvp,rezende2015flow,Kobyzev2021,papamakarios2021nf} with transformer-based in-context conditioning~\cite{vaswani2017attention,yang2023icon}. The coupling structure provides exact inversion and analytic log-determinants required for score evaluation, while transformer cross-attention conditions the entire flow on the task prompt, turning a single invertible network into a solution operator over a family of stochastic control problems. The time-dependent parameterization further enforces $G(\cdot,0)=\mathrm{id}$ by construction. Figure~\ref{fig:arch} illustrates the resulting architecture.
We note that a related combination of coupling-based normalizing flows and transformer architectures has been explored in~\cite{kolesnikov2024jet} for an image generative model; NFIST adapts these architectural ingredients to a fundamentally different setting, using task-conditioned invertible flows to represent solution operators for stochastic MFC.

\begin{figure}[t]
\centering
\begin{tikzpicture}[>={Stealth[round]},font=\small,
  box/.style={fill=white,draw,rounded corners,minimum height=7mm,align=center},
  emb/.style={box,fill=blue!6}, 
  net/.style={box,fill=orange!12},
  input/.style={box,fill=violet!12,draw=violet,text=violet,thick},
  output/.style={box,fill=blue!12,draw=blue,text=blue,thick},
  attn/.style={box,fill=purple!15,draw=purple,thick}, 
  res/.style={box,fill=olive!10,minimum width=2.4cm,minimum height=8mm,thick,draw=olive}, 
  arr/.style={->}]
  
  \begin{scope}[node distance=4mm and 5mm]
      \node[net] (tmlp) {MLP};
      \node[attn,right=of tmlp] (sa) {SelfAttn}; 
      \node[attn,right=of sa] (ca) {CrossAttn};  
      \node[net,right=of ca] (mlp) {MLP};
      
      \foreach \a/\b in {tmlp/sa,sa/ca,ca/mlp} \draw[arr] (\a)--(\b);
      
      \node[box,below=6mm of ca] (c) {context $c$};
      
      \node[attn,left=6mm of c] (setattn) {SelfAttn};
      \node[net,left=of setattn] (embed1) {MLP};
      \node[input,left=9mm of embed1,align=right] (prompts) {Non-parametric\\prompts};

      \node[net,below=6mm of embed1] (embed2) {MLP};
      \node[input,left=9mm of embed2,align=right] (betas) {Parametric\\prompts};

      \draw[arr] (prompts)--(embed1);
      \draw[arr] (embed1)--(setattn);
      \draw[arr] (setattn)--(c.west);
      
      \draw[arr] (betas)--(embed2);
      \draw[arr] (embed2.east) -| (c.south); 
      
      \draw[arr] (c)--(ca); 
      
      \node[box, left=9mm of tmlp] (in_x0) {$(x,t)$};
      \node[box, right=9mm of mlp] (out_st) {$(\alpha,\beta)$};
      \draw[arr] (in_x0) -- (tmlp);
      \draw[arr] (mlp) -- (out_st);
  \end{scope}

  \begin{scope}[on background layer]
      \node[draw=teal, dashed, thick, fill=teal!8, rounded corners, inner sep=4mm, 
            fit= (tmlp) (mlp) (c) (embed2)] (magnified) {};
  \end{scope}

  \path (magnified.north) ++(0, 1.4) coordinate (mid_axis);

  \begin{scope}[node distance=6mm and 8mm]
      \node[box, fill=teal!25, minimum height=12mm, minimum width=3.5cm, anchor=center,thick,draw=teal,text=teal!50!black] (trans_mid) at ([xshift=-1.55cm]mid_axis) {\textbf{{\large Transformer}}};
      \node[box, left=of trans_mid] (q_mid) {$\left(x^{(l-1)},t\right)$};
      \node[box, right=of trans_mid] (st_mid) {$(\alpha,\beta)$};
      \node[box, fill=lime!10, right=of st_mid] (ell_mid) {Affine Coupling~\eqref{equ:coupling}};
      \node[box, right=of ell_mid] (q_mid_out) {$\left(x^{(l)},t\right)$};
      
      \draw[arr] (q_mid) -- (trans_mid);
      \draw[arr] (trans_mid) -- (st_mid);
      \draw[arr] (st_mid) -- (ell_mid);
      \draw[arr] (ell_mid) -- (q_mid_out);

      \draw[arr, rounded corners] (q_mid.north) -- ++(0, 0.6) -| (ell_mid.north);
  \end{scope}

  \begin{scope}[on background layer]
      \path (q_mid.north) ++(2.0, 0.75) node (jump_top) {}; 
      \node[draw=olive, dashed, thick, fill=olive!3, rounded corners, inner sep=4mm, 
            fit=(ell_mid) (trans_mid) (st_mid) (jump_top)] (mid_layer) {};
  \end{scope}

  \path (mid_layer.north) ++(0, 1.2) coordinate (macro_axis);

  \begin{scope}[node distance=6mm and 8mm]
      \node[res, anchor=center] (macro_ell) at (macro_axis) {Coupling Block $\ell$};
      \node[left=4mm of macro_ell] (macro_dots1) {$\dots$};
      \node[res, left=4mm of macro_dots1] (macro_1) {Coupling Block $1$};
      \node[input, left=9mm of macro_1,minimum height=9mm] (z_in) {{\large $\,(x_0,t)\,$}};

      \node[right=4mm of macro_ell] (macro_dots2) {$\dots$};
      \node[res, right=4mm of macro_dots2] (macro_L) {Coupling Block $L$};
      \node[output, right=9mm of macro_L,minimum height=9mm] (z_out) {{\large $\,(x_t)\,$}};

      \draw[arr] (z_in) -- (macro_1);
      \draw[arr] (macro_1) -- (macro_dots1);
      \draw[arr] (macro_dots1) -- (macro_ell);
      \draw[arr] (macro_ell) -- (macro_dots2);
      \draw[arr] (macro_dots2) -- (macro_L);
      \draw[arr] (macro_L) -- (z_out);
  \end{scope}
  
  \draw[dashed, olive, thick] (macro_ell.south west) -- (mid_layer.north west);
  \draw[dashed, olive, thick] (macro_ell.south east) -- (mid_layer.north east);

  \draw[dashed, teal, thick] (trans_mid.south west) -- (magnified.north west);
  \draw[dashed, teal, thick] (trans_mid.south east) -- (magnified.north east);

\end{tikzpicture}
\caption{
Architecture of NFIST.
\textbf{Top:} The solution operator is realized as a stack of affine coupling blocks.
\textbf{Middle:} Each coupling block uses a transformer to produce the affine parameters $(\alpha,\beta)$ that define the coupling transformation.
\textbf{Bottom:} The transformer conditions the flow on the task prompt. Query particles $(x,t)$ are processed by self-attention and cross-attention with a context $c$ constructed from either non-parametric particle-cloud prompts or parametric prompt tokens.
}
\label{fig:arch}
\end{figure}

\paragraph{Affine coupling layer}
Let $c$ denote the encoded task context. We construct a bijection
$F(\cdot,t,c)=G(\cdot,t)$ satisfying
$F(\cdot,0,c)=\mathrm{id}$,
with an explicit inverse and Jacobian log-determinant.

Let $A\subset\{1,\ldots,d\}$ be the conditioning coordinates,
$B=A^c$ the transformed coordinates, and
$\chi\in\{0,1\}^d$ the corresponding mask, where
$\chi_i=1$ iff $i\in A$. For any $x\in\R^d$, define the masked vectors $x_A:=\chi\odot x$ and $x_B:=(1-\chi)\odot x$, so that both belong to $\R^d$ and satisfy
$x=x_A+x_B$. A RealNVP affine coupling layer~\cite{dinh2017realnvp}
acts as
\begin{align}
\ell(x)=x_A+x_B\odot\exp(\alpha(x_A))+\beta(x_A),
\label{equ:coupling}
\end{align}
where $\alpha,\beta:\R^d\to\R^d$ vanish on the conditioning coordinates, so only the entries indexed by $B$ are modified.

If the coordinates are ordered as $(A,B)$, the Jacobian has the block form
\begin{align}
\nabla\ell(x) = \begin{pmatrix}
I & 0\\
* & D
\end{pmatrix},
\label{equ:coupling_jac}
\end{align}
where $D=\operatorname{diag}\!\left(
\exp(\alpha(x_A)_i)
\right)_{i\in B}$. Hence, $\log \bigl|\det \boldsymbol{\nabla}\ell(x)\bigr|=\sum_{i\in B} \alpha(x_A)_i$. The inverse is obtained explicitly as
\begin{align}
\ell^{-1}(y)
=
y_A
+
\bigl(y_B-\beta(y_A)\bigr)
\odot
\exp\bigl(-\alpha(y_A)\bigr),
\label{equ:coupling_inv}
\end{align}

\paragraph{Time-conditioned layer} 
The affine coupling layer defines a static bijection. To obtain a time-dependent, task-conditioned flow, we let the affine parameters depend on the masked input $x_A$, the time $t\in[0,T]$, and the context $c$. Each layer first predicts unconstrained fields $\tilde\alpha(x_A,t,c)$ and $\tilde\beta(x_A,t,c)$, which are transformed as
\begin{align}
    \alpha(x_A,t,c)
    &=t(1-\chi)\odot\tanh\bigl(\tilde\alpha(x_A,t,c)\bigr),
    \label{equ:alpha_scaled}\\
    \beta(x_A,t,c)
    &=t(1-\chi)\odot\tilde\beta(x_A,t,c).
    \label{equ:beta_scaled}
\end{align}
The time factor ensures $\alpha(\cdot,0,c)=\beta(\cdot,0,c)=0$, so every coupling layer, and hence the entire flow, is exactly the identity at $t=0$. The $\tanh$ bounds the log-scale $\alpha$, improving numerical stability, while the mask $(1-\chi)$ preserves the affine coupling structure by enforcing $\alpha_A=\beta_A=0$.

\paragraph{Prompt encoding and cross-attention conditioning}
Both task representations introduced in Section~\ref{sec:task_disc} are mapped to a common context representation. Each prompt slot is embedded by a token-wise MLP into a feature vector of dimension $d_{\mathrm{fea}}$, and the resulting tokens are concatenated into a context tensor $c\in\R^{N_c\times d_{\mathrm{fea}}}$, where $N_c$ is the total number of prompt tokens. Parametric inputs (Euclidean parameters or Gaussian tokens) contribute one token each, while a particle cloud contributes one token per sample. For particle-cloud prompts, a small self-attention encoder is first applied to capture interactions among the samples before forming the context.

Within each coupling layer, the affine parameters are generated from the masked input $x_A$ and the time $t$ through
\begin{align}
u &= \mathrm{TimeMLP}(x_A,t),\notag\\
u &\leftarrow \mathrm{SelfAttn}(u,u,u),\notag\\
u &\leftarrow \mathrm{CrossAttn}(u,c,c),\notag\\
(\tilde\alpha,\tilde\beta) &= \mathrm{MLP}_{\alpha\beta}(u),
\label{equ:layer_net}
\end{align}
where $\mathrm{MLP}_{\alpha\beta}$ outputs the unconstrained fields $\tilde\alpha,\tilde\beta\in\R^d$ used in
\eqref{equ:alpha_scaled}--\eqref{equ:beta_scaled}.

The self-attention layer captures interactions among the query particles, while the cross-attention layer conditions the transport on the task context $c$. Since all remaining operations are token-wise, the architecture is \emph{permutation-equivariant} in the query particles and \emph{permutation-invariant} in the prompt tokens, naturally accommodating variable numbers of particles and prompt tokens.

\paragraph{Stacked flow}
The complete flow is obtained by composing $L$ coupling layers with alternating masks,
\[
F(\cdot,t,c)=\ell_L\circ\cdots\circ\ell_1.
\]
Alternating the masks ensures that every coordinate is transformed across successive layers. Since each layer is the identity at $t=0$, the composition also satisfies $F(\cdot,0,c)=\mathrm{id}$. Let $z^{(0)}=x$ and $z^{(k)}=\ell_k(z^{(k-1)})$. By the chain rule, the Jacobian log-determinant is
\begin{align}
\log\bigl|\det\nabla_xF(x,t,c)\bigr|
=
\sum_{k=1}^{L}
\mathbf1^\top
\alpha^{(k)}\!\left(z^{(k-1)},t,c\right),
\label{equ:stack_logdet}
\end{align}
where $\alpha^{(k)}$ denotes the log-scale field of the $k$-th coupling layer.

At $d=1$, the construction specializes: one of the two blocks is necessarily empty, so the stack collapses to a single task- and time-conditioned affine map $x\mapsto e^{\bar\alpha(t,c)}x + \bar\beta(t,c)$. Its inverse and Jacobian log-determinant required for score evaluation remain available in explicit form, while the identity condition at $t=0$ continues to hold by construction.

\paragraph{Computational efficiency} 
Section~\ref{sec:task_disc} evaluates the score through one inverse pass of the flow followed by one reverse-mode sweep~\eqref{equ:score_discrete}. For the coupling-flow stack above, the inverse pass is explicit. Setting $\zeta^{(L)}=y$ and recursively $\zeta^{(k-1)}=\ell_k^{-1}(\zeta^{(k)})$, so that $\zeta^{(0)}=F^{-1}(y,t,c)$, the score at a transported point $y=F(x,t,c)$ is
\begin{align}
\mathbf{s}(y,t)
=
\nabla_y^{\mathrm{autodiff}}
\left[
\log p_0\bigl(F^{-1}(y,t,c)\bigr)
-
\sum_{k=1}^{L}
\mathbf1^\top
\alpha^{(k)}\!\left(\zeta^{(k-1)},t,c\right)
\right].
\label{equ:score_inverse}
\end{align}

At fixed depth and feature width, all flow-related operations scale linearly with the state dimension $d$. Each coupling layer has an explicit inverse through~\eqref{equ:coupling_inv}, and its log-determinant is obtained directly from the same scaling field $\alpha^{(k)}$ produced by the conditioner. Consequently, a forward or inverse pass requires one evaluation of each coupling layer, while accumulating the log-determinant adds only the elementwise sum of their $\mathcal O(d)$ scaling outputs. Moreover, the gradient in~\eqref{equ:score_inverse} is computed by a single reverse-mode sweep through the inverse pass. Within each coupling network~\eqref{equ:layer_net}, the dependence on $d$ appears only in the input lift $\R^{d+1}\to\R^h$, the output head $\R^h\to\R^{2d}$ producing $(\tilde\alpha,\tilde\beta)$, and the elementwise coupling operation~\eqref{equ:coupling}; all intermediate computations are performed at fixed feature width. Consequently, for fixed $L$ and network width, evaluating the flow map, its exact inverse and log-determinant, and the score all have $\mathcal O(d)$ complexity per particle.

Overall, the architecture satisfies the computational requirements of the training objective: the forward map, inverse map, Jacobian log-determinant, and score are all evaluated exactly and differentiated end-to-end, enabling efficient optimization of the expected objective~\eqref{equ:sol_op_obj}.

\section{Numerical Experiments}\label{sec:results}

We evaluate the proposed solution operator on four representative stochastic mean-field control problems: stochastic optimal control (Section~\ref{sec:exp_soc}), the Schr\"odinger bridge (Section~\ref{sec:exp_sb}), the systemic-risk model (Section~\ref{sec:exp_sr1}), and obstacle-avoiding path planning (Section~\ref{sec:exp_pp_task}).

Unless otherwise specified, all experiments use the same self-supervised training procedure followed by Algorithm~\ref{alg:soc}. At each iteration, tasks are sampled from the corresponding task meta distribution, the objective is evaluated using the analytical score formula of Lemma~\ref{lem:score_pushforward}, and the network parameters are updated by AdamW \cite{loshchilov2018decoupled}. Every operator uses the architecture of Section~\ref{sec:sol}, consisting of $8$ coupling layers with hidden width $64$, $4$ attention heads of width $64$, and GELU activations. Training uses a learning rate of $10^{-4}$, weight decay $10^{-3}$, gradient clipping at $1.0$, and a Fokker--Planck discretization with $n_{t}=16$ grid nodes, while inference uses $n_{\rm time}=32$ grid nodes. All experiments are trained and timed on a single NVIDIA RTX~4090 GPU (24 GB VRAM) with an Intel i9-9940X CPU.

\label{sec:exp_metrics}
Each experiment is evaluated against its exact reference (see details in \ref{sec:appendix_exact_sol}) using two relative errors, both averaged over the time horizon. For a task $\sT$, let $0=t_1<\cdots<t_{n_{\rm time}}=T$ be the inference grid, $x_j^i=G(x_0^i,t_j)$ the predicted trajectories, and $x_j^{\star,i}$ the corresponding reference trajectories generated from the same initial particles. Let $s=\nabla\log p$ denote the score function numerically computed by~\eqref{equ:score_inverse}, and let $s^\star$ be the corresponding reference score. 

\begin{align}
  E_{\rm traj}(\sT) &= \frac{1}{n_{\rm time}}\sum_{j=1}^{n_{\rm time}}
    \frac{\sum_{i=1}^{N_{\rm pt}}\norm{x^{i}_{j}-x^{\star,i}_{j}}}
         {\sum_{i=1}^{N_{\rm pt}}\norm{x^{\star,i}_{j}}},
    \label{equ:err_traj}\\
  E_{\rm score}(\sT) &= \frac{1}{n_{\rm time}}\sum_{j=1}^{n_{\rm time}}
    \frac{\sum_{i=1}^{N_{\rm pt}}\norm{s\qb{x^{i}_{j},t_j}-s^{\star}\qb{x^{i}_{j},t_j}}}
         {\sum_{i=1}^{N_{\rm pt}}\norm{s^{\star}\qb{x^{i}_{j},t_j}}}.
    \label{equ:err_score}
\end{align}

In all the tables, both timing columns (``transport" and ``$+$\,score") are measured on a single task with $N_{\rm pt}=100$ transported particles. The ``transport'' column is one batched rollout that evaluates the flow at all $n_{\rm time}$ nodes in a single call, whereas the ``$+$\,score'' column adds one score evaluation at every node. 

\subsection{Stochastic Optimal Control (SOC)}
\label{sec:exp_soc}

We first evaluate the proposed operator on stochastic optimal control problems of increasing complexity. Section~\ref{sec:exp_soc1} considers the single-well (quadratic-terminal) problem with parametric prompts, using the closed-form Gaussian solution discussed in \ref{sec:appendix_soc} as the exact reference across dimensions $d\in\{1,2,5,8,11,14\}$. Section~\ref{sec:exp_nonparam} replaces the parametric representation by non-parametric particle-cloud prompts to assess the effect of empirical conditioning. Finally, Section~\ref{sec:exp_soc_mw} considers a genuinely multimodal multi-well terminal potential, using the Hopf--Cole reference solution discussed in \ref{sec:appendix_soc_mw}.

\subsubsection{Single-well potential, parametric prompts}
\label{sec:exp_soc1}
\begin{figure}[!htpb]
    \centering
    \setlength{\abovecaptionskip}{3pt}
    \includegraphics[width=\linewidth]{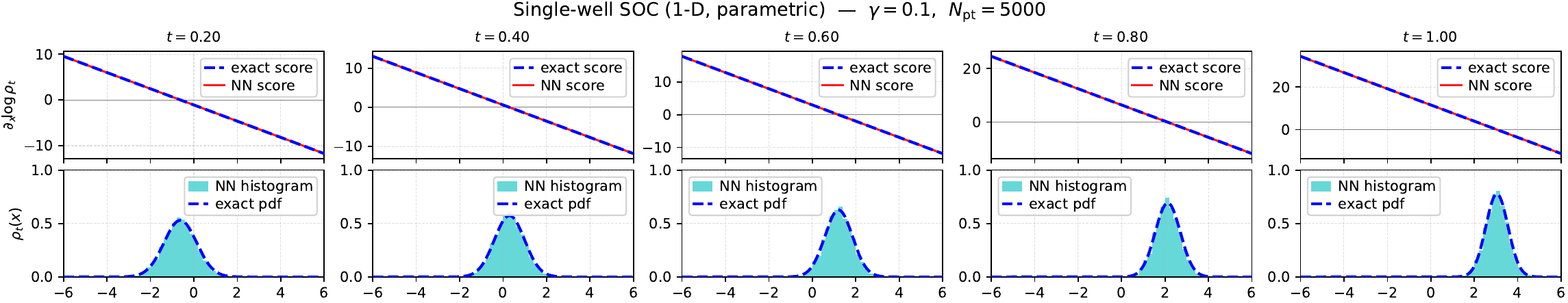}\\[3pt]
    \includegraphics[width=\linewidth]{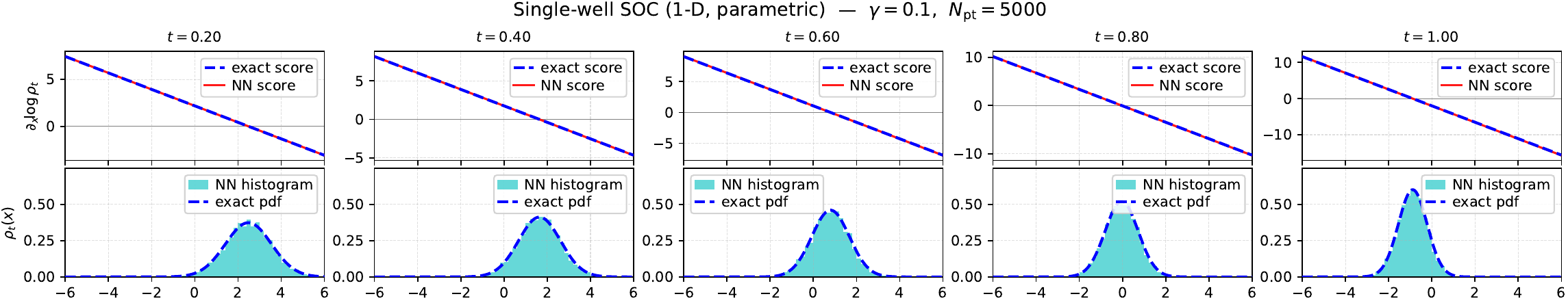}\\[3pt]
    \includegraphics[width=\linewidth]{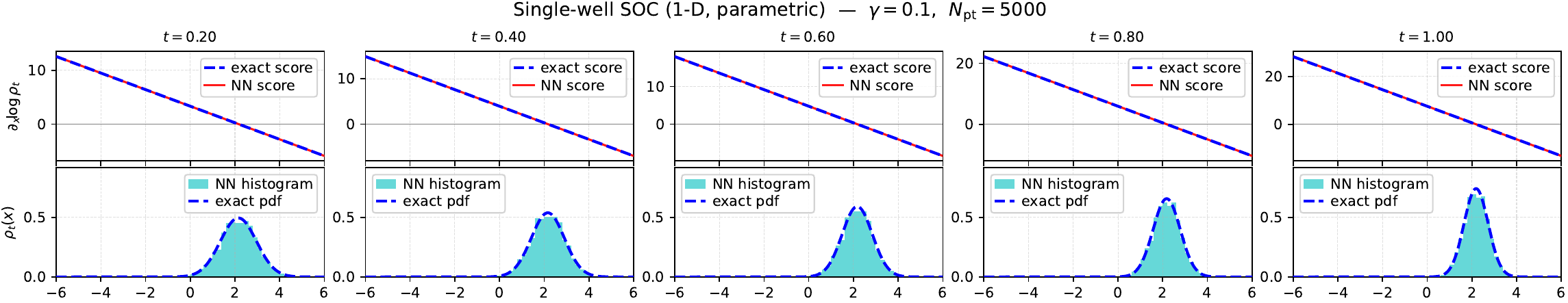}
    \caption{Single-well one-dimensional SOC problem, one solution operator netowrk ($\g=0.1$; random initial mean/variance and well location), on three tasks (strips). 
    In both rows the solid curve is the operator output and the dashed curve the
    exact reference, drawn on top, so exact agreement appears as a dashed curve
    riding on a solid one.}
    \label{fig:soc_density}
\end{figure}

We first consider the SOC problem with a quadratic terminal cost. Tasks are parameterized by the well center $\m_1$, sampled uniformly from a bounded region. In the one-dimensional experiments, the initial distribution $P_0=\sN(m_0,\sigma_0^2)$ is additionally randomized in both mean and variance, and the prompt contains $(m_0,\sigma_0,\m_1)$. The terminal potential is the squared distance to the well center $V(x)=\|x-\m_1\|^2$ so that the terminal cost $M(p(\cdot,T))=\int_\Omega V(x)\,p(x,T)\dd x$ penalizes deviation from $\m_1$. The closed form is available in \ref{sec:appendix_soc}.

Figure~\ref{fig:soc_density} compares the learned solution with the analytical Gaussian reference on three tasks. The predicted particle distribution accurately matches the exact density throughout the evolution, while the analytical score recovered from the learned flow remains nearly indistinguishable. Figure~\ref{fig:soc_sweep} illustrates this zero-shot generalization in two dimensions. The transport map and the predicted terminal distribution agree with the closed-form social optimum for each unseen task.

\begin{figure}[!htpb]
    \centering
    \includegraphics[width=\linewidth]{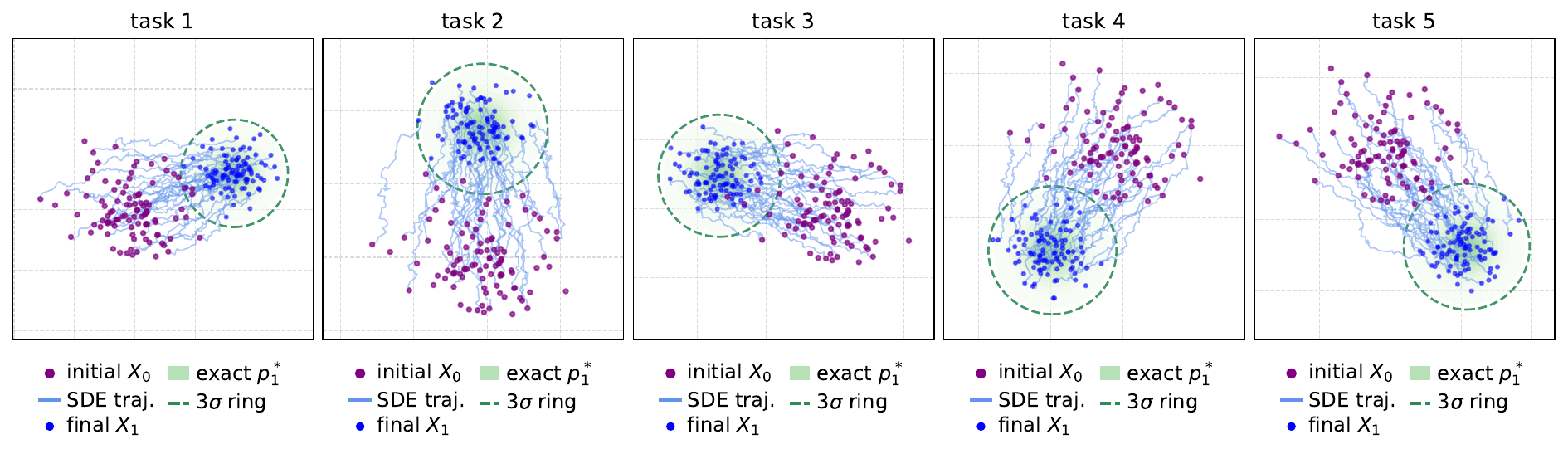}
    \caption{One SOC solution operator evaluated zero-shot over the plane of well
    centers  ($d=2$, $\gamma=0.1$). The SDE trajectories are simulated according to section~\ref{sec:sde_recovery}.
    }
    \label{fig:soc_sweep}
\end{figure}

Table~\ref{tab:soc_error_comparison} reports quantitative errors across diffusion coefficients and state dimensions. For $d=2$, the relative trajectory error remains at the $10^{-3}$ level over two orders of magnitude in the diffusion coefficient $\gamma$, indicating that the analytical score accurately captures the Fokker--Planck dynamics across diffusion regimes. The one-dimensional experiments consider a richer task family by additionally varying the initial mean and variance, yet achieve comparable accuracy. To assess scalability, we further train one operator for each dimension $d\in\{5,8,11,14\}$. Both trajectory and score errors remain at the $10^{-3}$ level, demonstrating that the proposed operator maintains high accuracy in high dimensions.

\begin{table}[!htpb]
    \centering
    \caption{Single-well SOC vs.\ the closed-form Gaussian reference: per-task
    inference time and the relative errors~\eqref{equ:err_traj}
    and~\eqref{equ:err_score}. The $d=1$ family additionally
    randomizes the initial mean and variance.
    }
    \label{tab:soc_error_comparison}
    \begin{tabular}{cccccc}
        \toprule
         &  & \multicolumn{2}{c}{\textbf{Inference Time (s)}}
            & \multicolumn{2}{c}{\textbf{Relative error}} \\
        \cmidrule(lr){3-4}\cmidrule(lr){5-6}
        \textbf{dim} & $\boldsymbol{\g}$ & transport & $+$\,score
            & trajectory & score \\
        \midrule
        $1$  & $0.01$ & 0.013 & 0.752 & 3.60e-4 $\pm$ 2.78e-4 & 1.11e-3 $\pm$ 7.99e-4 \\
        $1$  & $0.1$  & 0.013 & 1.066 & 1.11e-3 $\pm$ 6.39e-4 & 3.29e-3 $\pm$ 1.74e-3 \\
        $1$  & $1.0$  & 0.013 & 0.650 & 2.57e-3 $\pm$ 1.51e-3 & 6.84e-3 $\pm$ 3.40e-3 \\
        \midrule
        $2$  & $0.01$ & 0.012 & 0.486 & 2.45e-3 $\pm$ 1.11e-3 & 9.67e-3 $\pm$ 4.13e-3 \\
        $2$  & $0.1$  & 0.010 & 0.539 & 1.07e-3 $\pm$ 4.19e-4 & 4.83e-3 $\pm$ 1.50e-3 \\
        $2$  & $1.0$  & 0.011 & 1.049 & 3.06e-3 $\pm$ 1.18e-3 & 8.83e-3 $\pm$ 3.90e-3 \\
        \midrule
        $5$  & $1.0$  & 0.013 & 1.363 & 2.75e-3 $\pm$ 7.47e-4 & 6.28e-3 $\pm$ 1.48e-3 \\
        $8$  & $1.0$  & 0.012 & 1.354 & 4.27e-3 $\pm$ 9.47e-4 & 8.48e-3 $\pm$ 1.70e-3 \\
        $11$ & $1.0$  & 0.012 & 0.677 & 4.31e-3 $\pm$ 8.40e-4 & 8.23e-3 $\pm$ 1.41e-3 \\
        $14$ & $1.0$  & 0.013 & 1.257 & 4.06e-3 $\pm$ 7.82e-4 & 7.07e-3 $\pm$ 1.24e-3 \\
        \bottomrule
    \end{tabular}
\end{table}

\subsubsection{Single-well potential, non-parametric prompts}
\label{sec:exp_nonparam}

\begin{figure}[htpb]
    \centering
    \setlength{\abovecaptionskip}{3pt}
    \includegraphics[width=\linewidth]{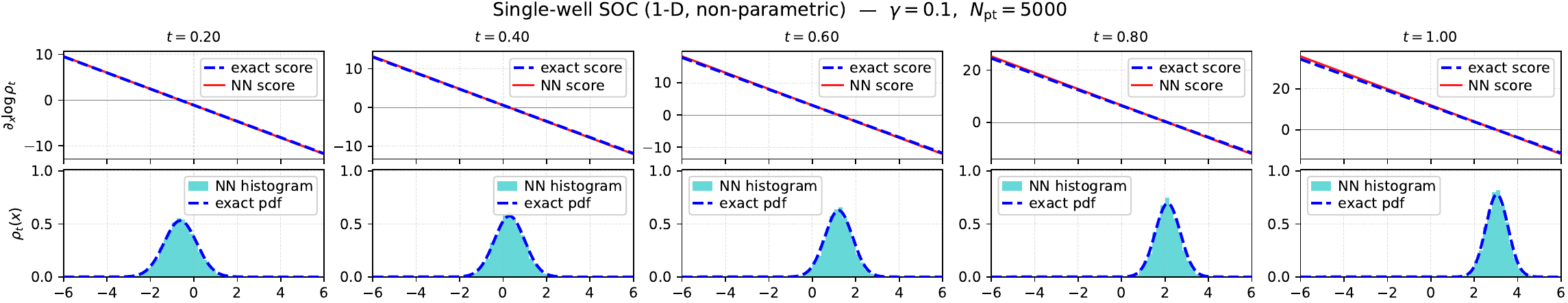}\\[3pt]
    \includegraphics[width=\linewidth]{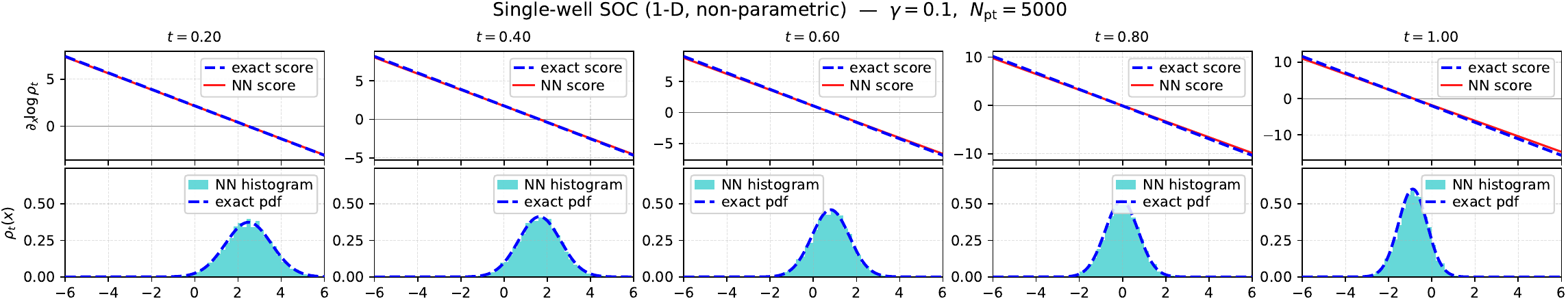}\\[3pt]
    \includegraphics[width=\linewidth]{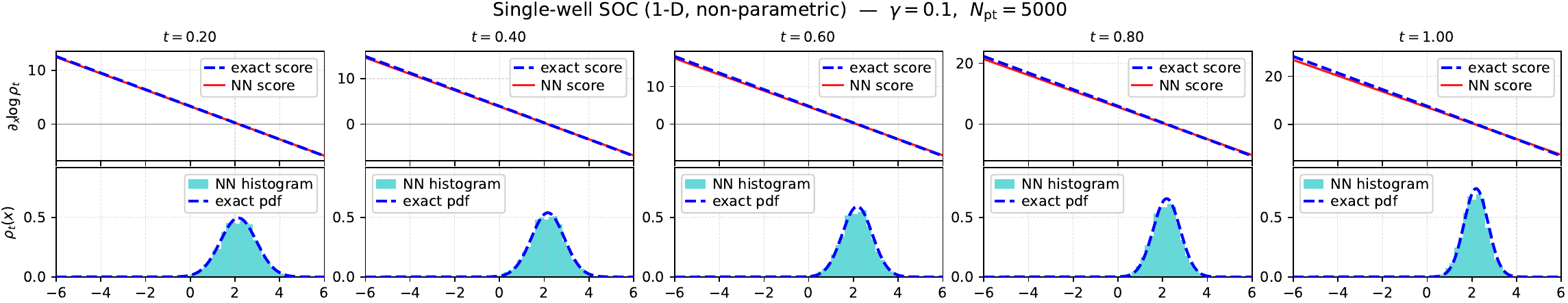}
    \caption{Single-well SOC with the \emph{non-parametric} particle-cloud
    prompt ($d=1$, $\g=0.1$): the operator is conditioned on raw $100$-point
    sample clouds $X_0$ rather than on $(m_0,\sigma_0)$. Same three 
    tasks and panel layout as the parametric Figure~\ref{fig:soc_density}.}
    \label{fig:isoc_fields}
\end{figure}

We next replace the parametric prompt by the non-parametric particle-cloud prompt of Section~\ref{sec:task_disc}, in which each task is represented by two sets of $N_{\rm pt}=100$ i.i.d. samples $X_0\sim P_0$, and $X_V\sim e^{-V(x)}$, representing the potential well $V(x)$ non-parametrically. All experimental settings, evaluation metrics, and task families are identical to those in Section~\ref{sec:exp_soc1}; only the prompt representation is changed, allowing a direct comparison between parametric and empirical conditioning.

\begin{figure}[htpb]
    \centering
    \includegraphics[width=\linewidth]{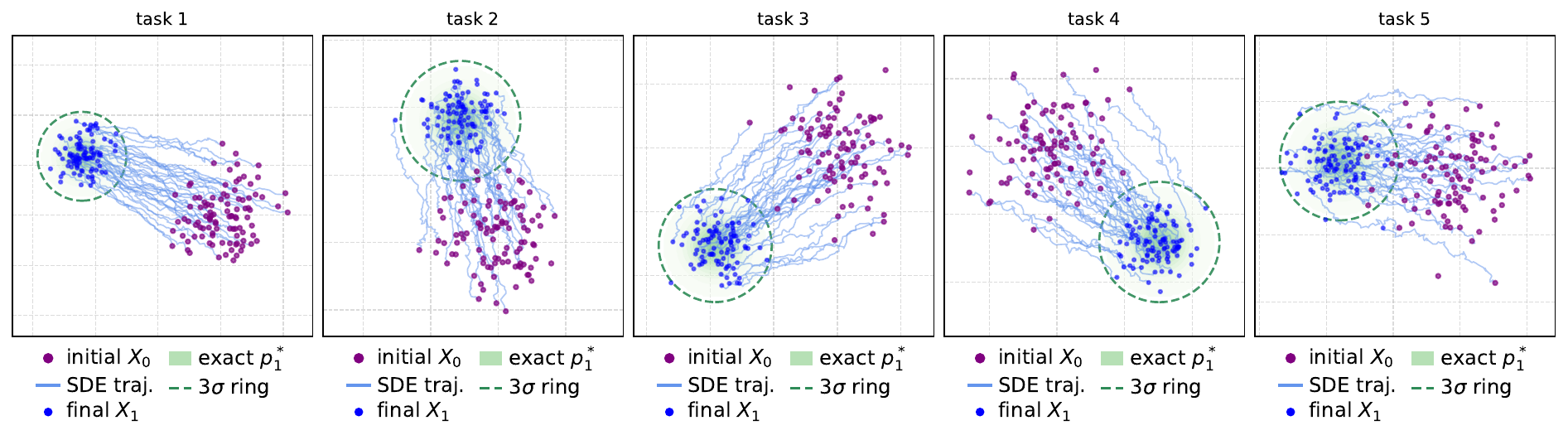}
    \caption{One non-parametric SOC solution operator evaluated zero-shot on 
    sampled tasks ($d=2$, $\gamma=0.1$). 
    The SDE trajectories are simulated according to section~\ref{sec:sde_recovery}.}
    \label{fig:isoc_sweep}
\end{figure}

Figures~\ref{fig:isoc_fields} and~\ref{fig:isoc_sweep} show that the learned operator remains qualitatively consistent with the parametric case. The predicted particle distribution closely matches the analytical Gaussian solution, the recovered score remains accurate throughout the evolution, and a single operator generalizes to unseen well locations under particle-cloud prompts.

Table~\ref{tab:nonparam_soc_error} reports the quantitative comparison. Relative trajectory errors and relative score errors remain at the $10^{-2}$ level, which is larger than those of the parametric operator. This loss of accuracy reflects the additional difficulty of learning directly from empirical particle clouds. The performance also remains stable as the state dimension increases. 

\begin{table}[!htpb]
    \centering
    \caption{Non-parametric (particle-cloud) single-well SOC vs.\ the exact
    Gaussian reference.
    }
    \label{tab:nonparam_soc_error}
    \begin{tabular}{cccccc}
        \toprule
         &  & \multicolumn{2}{c}{\textbf{Inference Time (s)}}
            & \multicolumn{2}{c}{\textbf{Relative error}} \\
        \cmidrule(lr){3-4}\cmidrule(lr){5-6}
        \textbf{dim} & $\boldsymbol{\g}$ & transport & $+$\,score
            & trajectory & score \\
        \midrule
        $1$  & $0.01$ & 0.014 & 1.370 & 1.73e-2 $\pm$ 1.66e-2 & 4.87e-2 $\pm$ 4.59e-2 \\
        $1$  & $0.1$  & 0.014 & 0.651 & 1.88e-2 $\pm$ 1.66e-2 & 4.93e-2 $\pm$ 4.00e-2 \\
        $1$  & $1.0$  & 0.014 & 1.373 & 3.17e-2 $\pm$ 2.26e-2 & 8.25e-2 $\pm$ 5.72e-2 \\
        \midrule
        $2$  & $0.01$ & 0.014 & 1.346 & 1.36e-2 $\pm$ 8.21e-3 & 4.72e-2 $\pm$ 2.42e-2 \\
        $2$  & $0.1$  & 0.014 & 0.664 & 1.24e-2 $\pm$ 7.47e-3 & 4.00e-2 $\pm$ 2.02e-2 \\
        $2$  & $1.0$  & 0.014 & 1.348 & 1.24e-2 $\pm$ 7.56e-3 & 2.73e-2 $\pm$ 1.47e-2 \\
        \midrule
        $5$  & $1.0$  & 0.013 & 1.322 & 1.63e-2 $\pm$ 5.59e-3 & 3.09e-2 $\pm$ 1.07e-2 \\
        $8$  & $1.0$  & 0.014 & 0.661 & 2.04e-2 $\pm$ 5.13e-3 & 3.54e-2 $\pm$ 8.47e-3 \\
        $11$ & $1.0$  & 0.014 & 1.359 & 2.10e-2 $\pm$ 4.57e-3 & 3.35e-2 $\pm$ 6.98e-3 \\
        $14$ & $1.0$  & 0.014 & 1.348 & 2.42e-2 $\pm$ 5.37e-3 & 3.69e-2 $\pm$ 7.93e-3 \\
        \bottomrule
    \end{tabular}
\end{table}

\subsubsection{Multi-well potential}
\label{sec:exp_soc_mw}

\begin{figure}[!htpb]
    \centering
    \includegraphics[width=\linewidth]{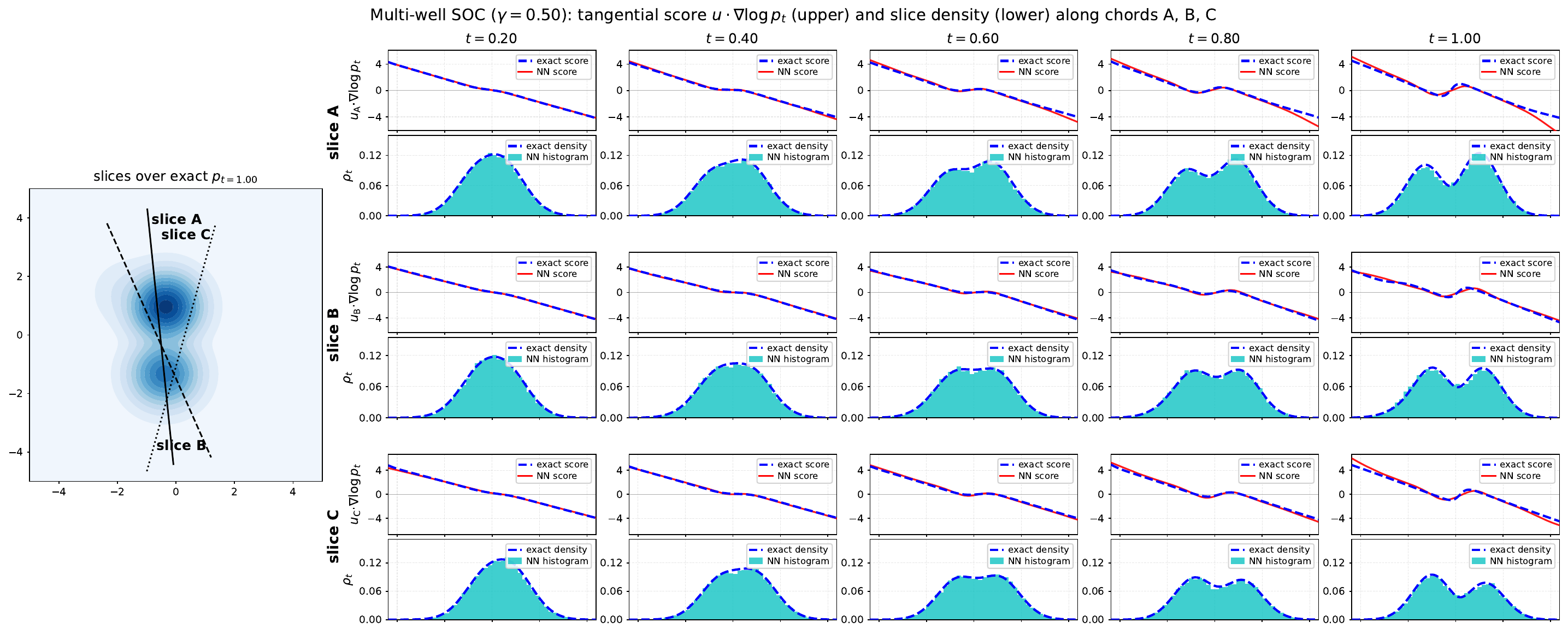}\\[12pt]
    \includegraphics[width=\linewidth]{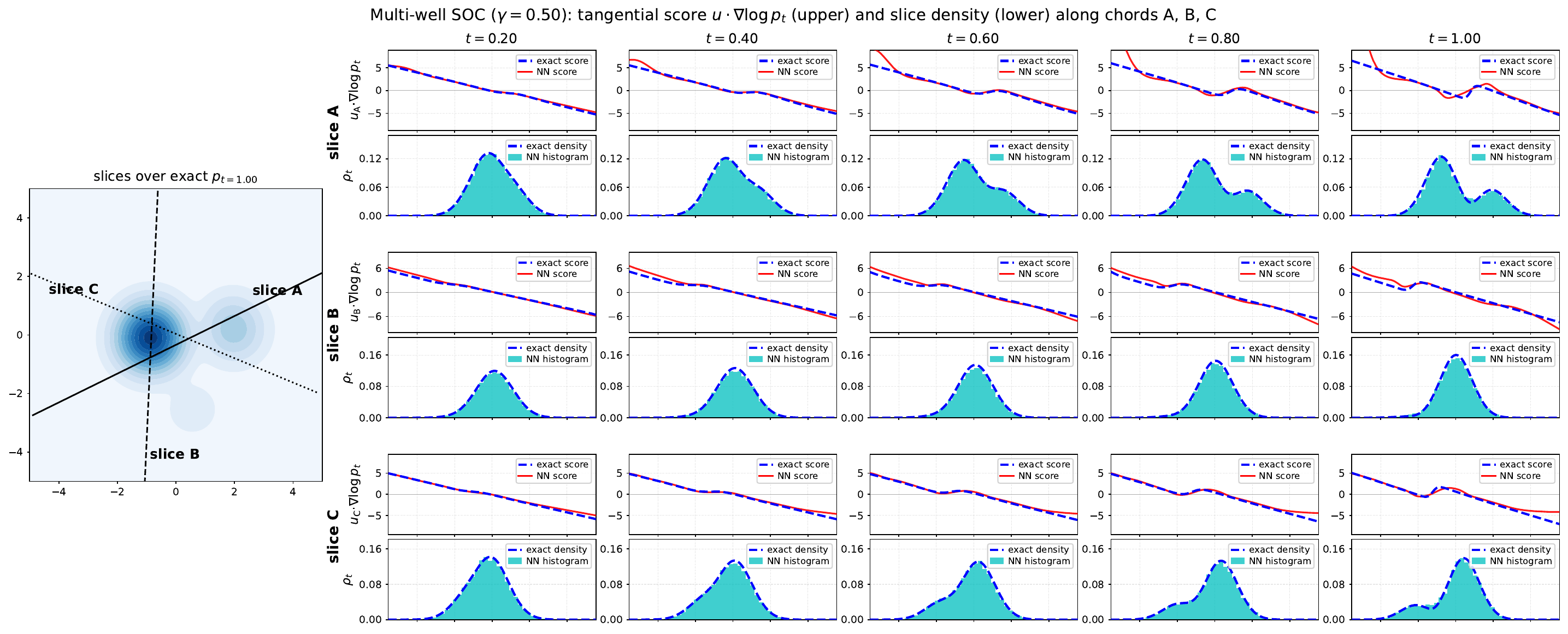}\\[12pt]
    \includegraphics[width=\linewidth]{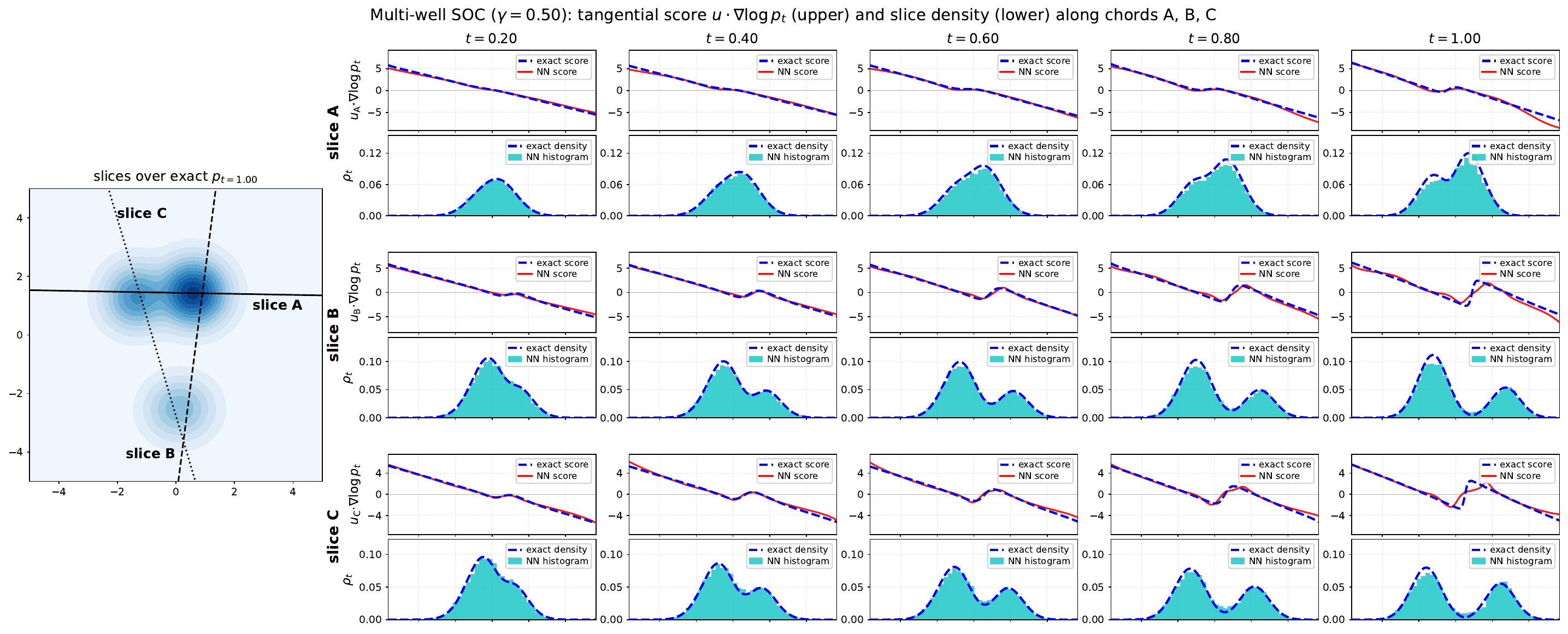}
    \caption{Multi-well SOC in the sharply multimodal $\g=0.5$ regime of three tasks. 
    Each strip samples the score along three high-density slices of the terminal law (as in the 2-D plot on the left).}
    \label{fig:mwsoc_g05_fields1d}
\end{figure}

We next consider a multimodal SOC problem with a three-well terminal potential, 
\[
    V(x)=-\log\sum_{i=1}^{n_V}e^{-\|x-\m_i\|^2}
    \ ,
\]
where the well centers $\m_i$ are sampled uniformly on a circle of radius $4$. Unlike the single-well problem, the initial Gaussian distribution must split and move toward multiple wells, producing a multimodal terminal density. The analytical Gaussian solution is no longer available, so the reference solution is computed from the Hopf--Cole formulation (see \ref{sec:appendix_soc_mw}). 

To increase the complexity of the multimodal target together with the state dimension, we consider $d\in\{2,5,8,11,14\}$ and, for each task, sample $n_V=d+1$ well centers subject to guarantee pairwise separation. For each dimension, we train one parametric and one non-parametric operator using matched batch sizes and evaluate the error metrics \eqref{equ:err_traj} and \eqref{equ:err_score}. The high-dimensional reference solutions are obtained by Monte Carlo evaluation of the Hopf--Cole representation described in Section~\ref{sec:appendix_soc_mw}; in $d=2$, this reference is further validated against the grid-based solver. Figure~\ref{fig:mwsoc_g05_fields1d} illustrates a representative two-dimensional task, showing that the learned operator accurately captures both the multimodal density evolution and the corresponding score score function in the high-probability region. The quantitative results in Table~\ref{tab:mwsoc_error} further show that both prompt representations remain effective as the state dimension and the number of potential wells increase.

\begin{table}[!htpb]
    \centering
    \caption{Multi-well SOC with wells spanning $\R^d$ vs.\ the Monte-Carlo
    Hopf--Cole reference of \ref{sec:appendix_soc_mw}: per-task inference time
    and the relative errors~\eqref{equ:err_traj} and~\eqref{equ:err_score}, the
    trajectory scored against the reference probability
    flow~\eqref{equ:hopfcole_pfode}. }
    \label{tab:mwsoc_error}
    \begin{tabular}{cccccc}
        \toprule
         &  & \multicolumn{2}{c}{\textbf{Inference Time (s)}}
            & \multicolumn{2}{c}{\textbf{Relative error}} \\
        \cmidrule(lr){3-4}\cmidrule(lr){5-6}
        \textbf{prompt type} & \textbf{dim} & transport & $+$\,score
            & trajectory & score \\
        \midrule
        Parametric & $2$ & 0.012 & 1.331 & 2.16e-2 $\pm$ 4.32e-3 & 7.25e-2 $\pm$ 1.23e-2 \\
        Parametric & $5$ & 0.012 & 1.330 & 4.29e-2 $\pm$ 3.00e-3 & 1.10e-1 $\pm$ 7.47e-3 \\
        Parametric & $8$ & 0.012 & 1.364 & 4.44e-2 $\pm$ 2.51e-3 & 1.07e-1 $\pm$ 4.58e-3 \\
        Parametric & $11$ & 0.012 & 1.380 & 4.56e-2 $\pm$ 1.61e-3 & 1.02e-1 $\pm$ 2.78e-3 \\
        Parametric & $14$ & 0.012 & 1.311 & 4.48e-2 $\pm$ 1.97e-3 & 9.57e-2 $\pm$ 3.05e-3 \\
        \midrule
        Non-parametric & $2$ & 0.014 & 1.335 & 2.71e-2 $\pm$ 6.38e-3 & 7.49e-2 $\pm$ 1.59e-2 \\
        Non-parametric & $5$ & 0.014 & 1.343 & 5.35e-2 $\pm$ 5.21e-3 & 1.32e-1 $\pm$ 1.10e-2 \\
        Non-parametric & $8$ & 0.013 & 1.346 & 6.02e-2 $\pm$ 3.33e-3 & 1.41e-1 $\pm$ 6.49e-3 \\
        Non-parametric & $11$ & 0.014 & 1.343 & 6.16e-2 $\pm$ 2.55e-3 & 1.41e-1 $\pm$ 5.90e-3 \\
        Non-parametric & $14$ & 0.013 & 1.340 & 6.30e-2 $\pm$ 2.54e-3 & 1.41e-1 $\pm$ 4.91e-3 \\
        \bottomrule
    \end{tabular}
\end{table}

\subsection{Schr\"odinger Bridge}
\label{sec:exp_sb}

We next evaluate the proposed operator on the Schr\"odinger bridge problem, which transports a prescribed initial distribution to a prescribed terminal distribution under entropy-regularized diffusion. The closed-form Gaussian bridge in ~\ref{sec:appendix_sb} provides the exact reference solution. Section~\ref{sec:exp_sb_gauss} considers parametric prompts for families of Gaussian endpoint pairs across dimensions $d\in\{1,2,5,8,11,14\}$, while Section~\ref{sec:exp_nonparam_sb} presents the results with non-parametric particle-cloud prompts. Compared with the stochastic optimal control problem, prescribing both endpoint distributions makes the bridge problem more challenging.

\subsubsection{Gaussian endpoints, parametric prompts}
\label{sec:exp_sb_gauss}

\begin{figure}[htpb]
    \centering
    \setlength{\abovecaptionskip}{3pt}
    \includegraphics[width=\linewidth]{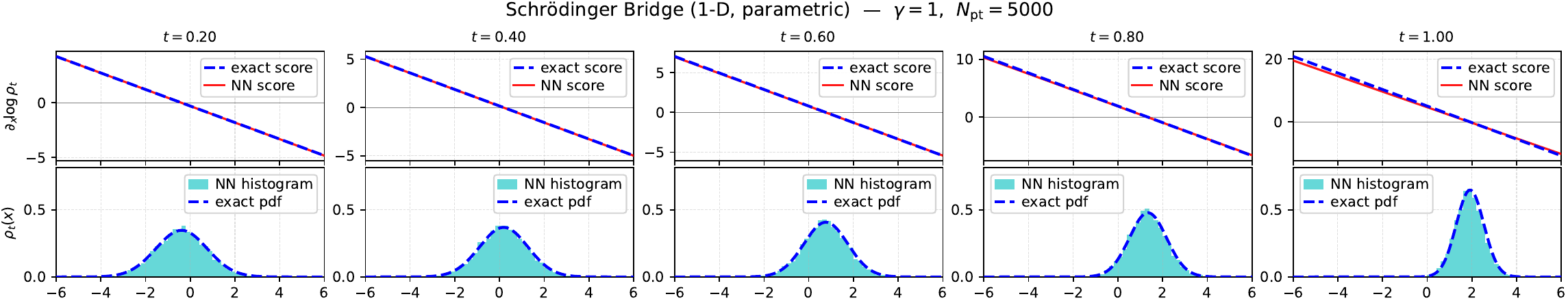}\\[3pt]
    \includegraphics[width=\linewidth]{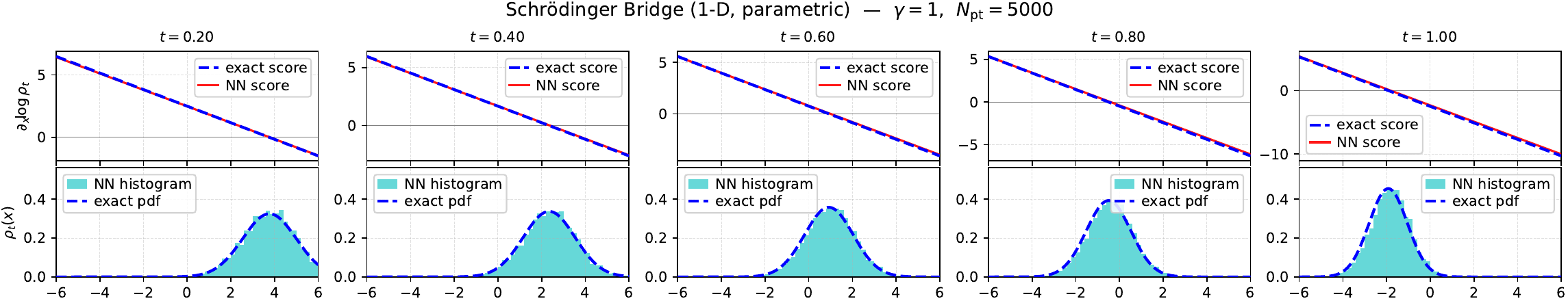}\\[3pt]
    \includegraphics[width=\linewidth]{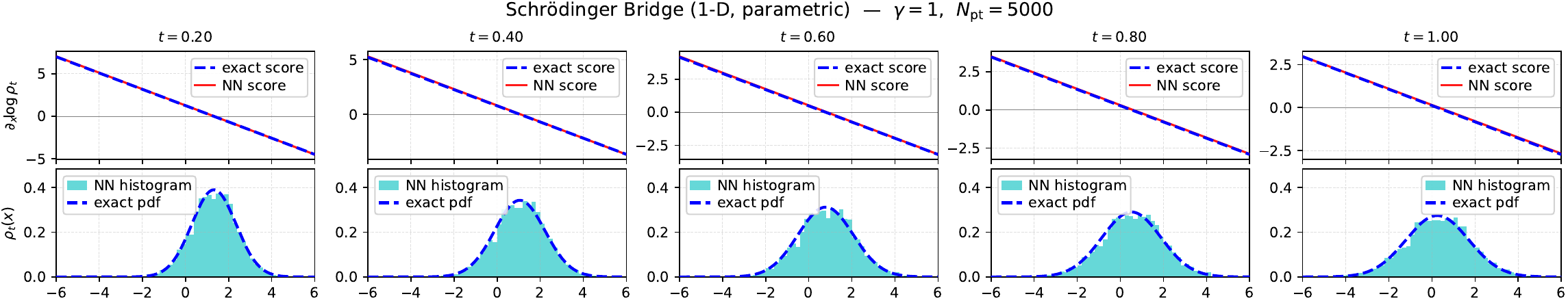}
    \caption{Schr\"odinger bridge, one-dimensional parametric solution operator
    ($\g=1.0$; random mean and variance at both endpoints).}
    \label{fig:sb_density}
\end{figure}

Figure~\ref{fig:sb_density} compares the learned bridge with the analytical Gaussian solution on three tasks. The predicted particle distribution closely matches the exact marginals throughout the evolution, while the recovered score remains in good agreement with the analytical score. Figure~\ref{fig:sb_sweep} further demonstrates zero-shot generalization: a single operator transports different initial distributions to their prescribed terminal distributions. Table~\ref{tab:sb_error_comparison} reports quantitative results across dimensions $d\in\{1,2,5,8,11,14\}$. Both relative trajectory errors and relative score errors remain at the $10^{-2}$ level, which is robust against varying state dimensions and the diffusion coefficient. 

\begin{figure}[!htpb]
    \centering
    \includegraphics[width=\linewidth]{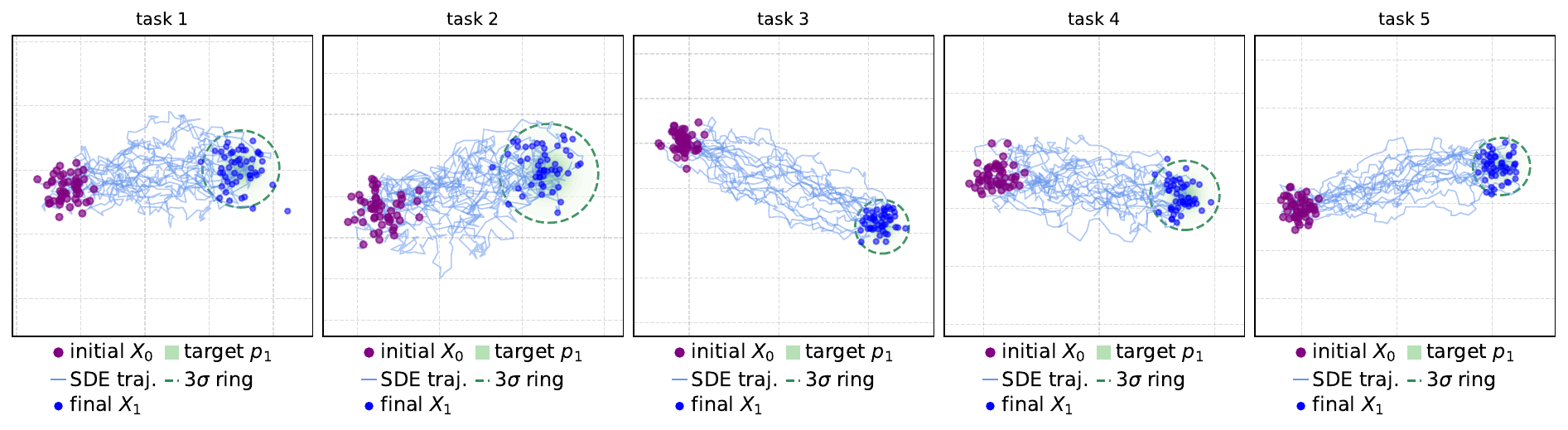}
    \caption{One bridge solution operator ($d=2$, $\g=5.0$), solved zero-shot; each panel framed to its own task. The prescribed terminal law $P_1$
    (green shading, dashed $3\sigma$) is the bridge's exact terminal marginal;
    trajectories (blue) carry each initial population (purple) onto it under
    fixed weights.
    The SDE trajectories are simulated according to section~\ref{sec:sde_recovery}.}
    \label{fig:sb_sweep}
\end{figure}

\begin{table}[!htpb]
    \centering
    \caption{Schr\"odinger bridge vs.\ the closed-form Gaussian reference:
    per-task inference time and the relative errors~\eqref{equ:err_traj}
    and~\eqref{equ:err_score}, The $d=1$ family randomizes mean and
    variance at both endpoints. 
    }
    \label{tab:sb_error_comparison}
    \begin{tabular}{cccccc}
        \toprule
         &  & \multicolumn{2}{c}{\textbf{Inference Time (s)}}
            & \multicolumn{2}{c}{\textbf{Relative error}} \\
        \cmidrule(lr){3-4}\cmidrule(lr){5-6}
        \textbf{dim} & $\boldsymbol{\g}$ & transport & $+$\,score
            & trajectory & score \\
        \midrule
        $1$  & $1.0$  & 0.014 & 0.714 & 3.27e-2 $\pm$ 2.47e-2 & 6.57e-2 $\pm$ 4.67e-2 \\
        $1$  & $5.0$  & 0.013 & 1.362 & 3.13e-2 $\pm$ 1.83e-2 & 6.90e-2 $\pm$ 2.93e-2 \\
        \midrule
        $2$  & $1.0$  & 0.012 & 1.379 & 2.12e-2 $\pm$ 1.16e-2 & 6.83e-2 $\pm$ 3.42e-2 \\
        $2$  & $5.0$  & 0.012 & 1.316 & 2.16e-2 $\pm$ 1.04e-2 & 6.14e-2 $\pm$ 2.45e-2 \\
        \midrule
        $5$  & $1.0$  & 0.012 & 1.322 & 2.28e-2 $\pm$ 7.57e-3 & 6.15e-2 $\pm$ 1.92e-2 \\
        $8$  & $1.0$  & 0.013 & 1.332 & 2.25e-2 $\pm$ 6.18e-3 & 5.38e-2 $\pm$ 1.28e-2 \\
        $11$ & $1.0$  & 0.012 & 1.174 & 2.58e-2 $\pm$ 7.41e-3 & 5.55e-2 $\pm$ 1.37e-2 \\
        $14$ & $1.0$  & 0.012 & 1.328 & 2.77e-2 $\pm$ 6.46e-3 & 5.55e-2 $\pm$ 1.11e-2 \\
        \bottomrule
    \end{tabular}
\end{table}

\subsubsection{Gaussian endpoints, non-parametric prompts}
\label{sec:exp_nonparam_sb}
We next consider non-parametric conditioning, in which the boundary distributions are represented directly by particle-cloud prompts $X_0\sim P_0$ and $X_1\sim P_1$. We keep the experimental settings, task families, and evaluation metrics identical to those in Section~\ref{sec:exp_sb_gauss}, changing only the prompt representation.

Figures~\ref{fig:isb_fields} and~\ref{fig:isb_sweep} show that the learned bridge remains qualitatively consistent with the parametric case. The predicted particle distribution accurately matches the analytical Gaussian marginals throughout the evolution, the recovered score remains close to the analytical score, and a single operator generalizes to unseen endpoint pairs under particle-cloud prompts.
Table~\ref{tab:nonparam_sb_error} reports the quantitative comparison. Relative trajectory errors remain at the $10^{-2}$ level and relative score errors at the $10^{-1}$ level across all dimensions and diffusion coefficients. Compared with the corresponding parametric operators, empirical particle-cloud prompts approximately doubled both trajectory and score errors, with little dependence on either the state dimension or the diffusion coefficient. 

\begin{figure}[htpb]
    \centering
    \setlength{\abovecaptionskip}{3pt}
    \includegraphics[width=\linewidth]{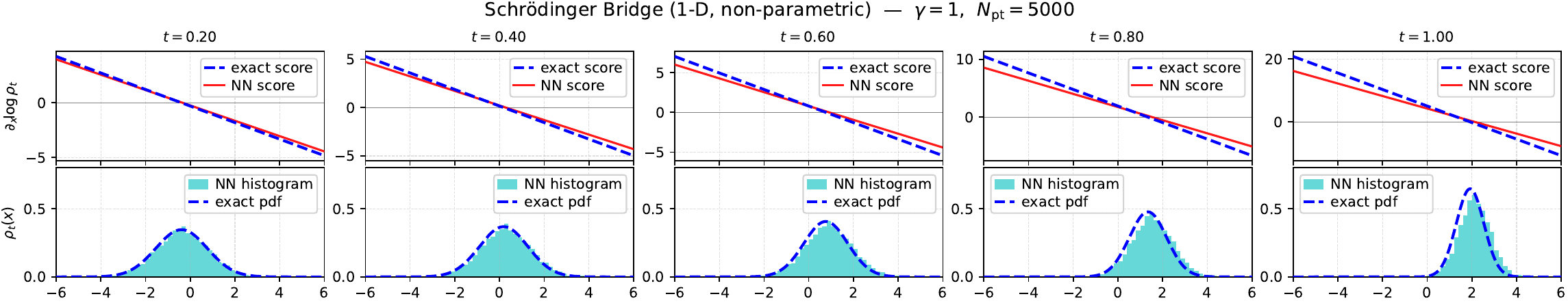}\\[3pt]
    \includegraphics[width=\linewidth]{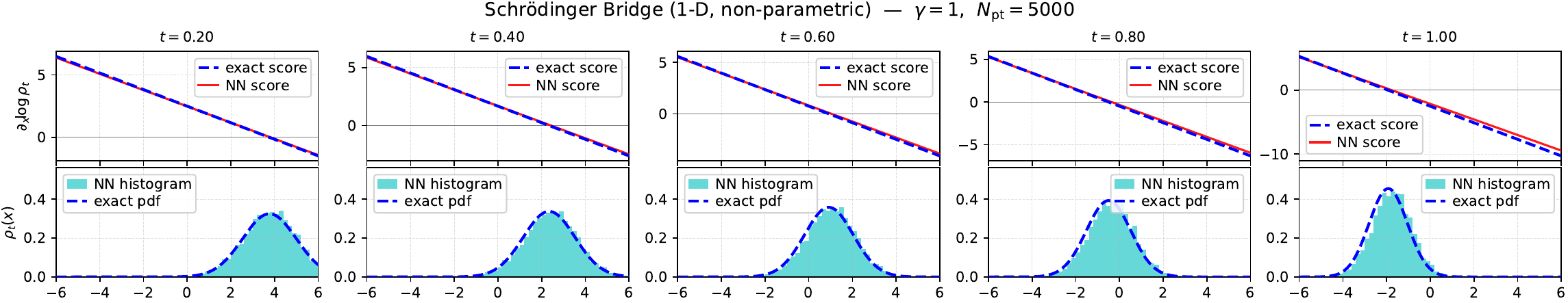}\\[3pt]
    \includegraphics[width=\linewidth]{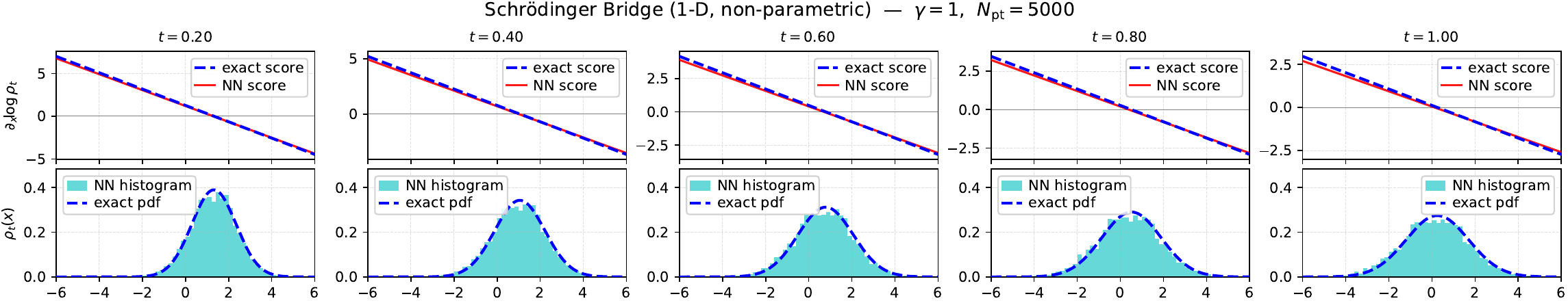}
    \caption{Schr\"odinger bridge with the \emph{non-parametric} particle-cloud
    prompt ($d=1$, $\g=1.0$; endpoints prompted by $100$-point sample clouds).
    }
    \label{fig:isb_fields}
\end{figure}

\begin{figure}[htpb]
    \centering
    \includegraphics[width=\linewidth]{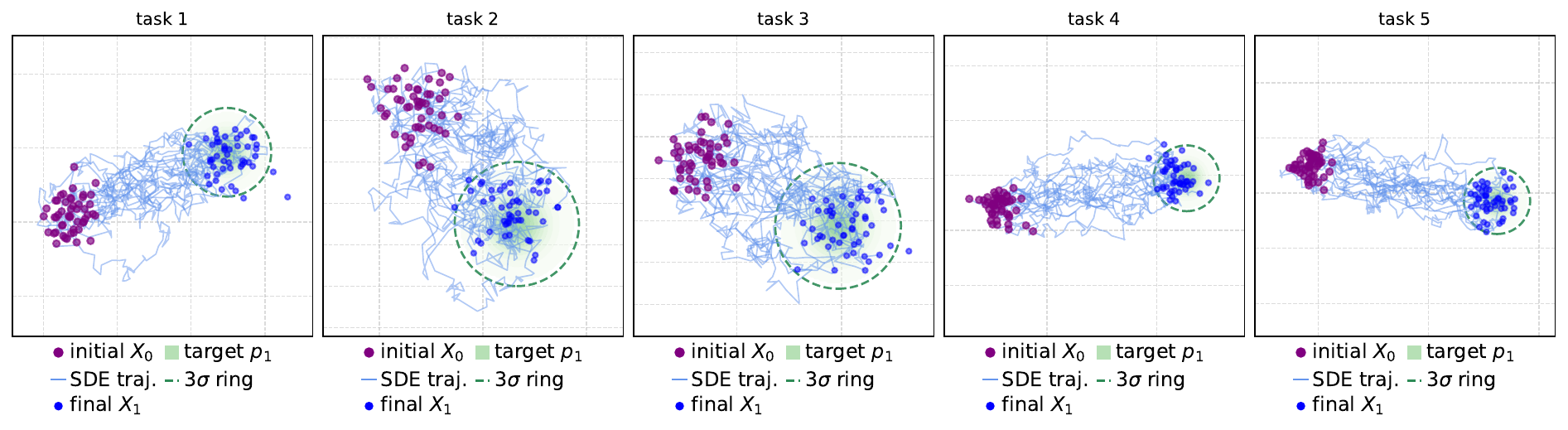}
    \caption{
    One non-parametric Schr\"odinger bridge solution operator ($d=2$, $\g=5.0$),
    solved zero-shot from particle-cloud prompts; panels as in
    Figure~\ref{fig:sb_sweep}: prescribed terminal law $P_1$ (green shading,
    dashed $3\sigma$), trajectories (blue), initial populations (purple).
    The SDE trajectories are simulated according to section~\ref{sec:sde_recovery}.}
    \label{fig:isb_sweep}
\end{figure}

\begin{table}[htpb]
    \centering
    \caption{Non-parametric (particle-cloud) Schr\"odinger bridge vs.\ the exact
    Gaussian reference, $N_{\rm pt}=100$ prompt samples. 
    }
    \label{tab:nonparam_sb_error}
    \begin{tabular}{cccccc}
        \toprule
         &  & \multicolumn{2}{c}{\textbf{Inference Time (s)}}
            & \multicolumn{2}{c}{\textbf{Relative error}} \\
        \cmidrule(lr){3-4}\cmidrule(lr){5-6}
        \textbf{dim} & $\boldsymbol{\g}$ & transport & $+$\,score
            & trajectory & score \\
        \midrule
        $1$  & $1.0$ & 0.015 & 1.320 & 6.70e-2 $\pm$ 4.35e-2 & 1.53e-1 $\pm$ 7.97e-2 \\
        $1$  & $5.0$ & 0.014 & 0.656 & 7.52e-2 $\pm$ 4.84e-2 & 1.70e-1 $\pm$ 1.01e-1 \\
        \midrule
        $2$  & $1.0$ & 0.014 & 0.865 & 4.15e-2 $\pm$ 2.34e-2 & 1.31e-1 $\pm$ 5.48e-2 \\
        $2$  & $5.0$ & 0.014 & 1.308 & 3.93e-2 $\pm$ 2.13e-2 & 9.66e-2 $\pm$ 3.90e-2 \\
        \midrule
        $5$  & $1.0$ & 0.014 & 0.992 & 4.60e-2 $\pm$ 1.77e-2 & 1.24e-1 $\pm$ 3.76e-2 \\
        $8$  & $1.0$ & 0.014 & 0.626 & 5.21e-2 $\pm$ 1.40e-2 & 1.26e-1 $\pm$ 2.88e-2 \\
        $11$ & $1.0$ & 0.014 & 1.359 & 6.10e-2 $\pm$ 1.54e-2 & 1.30e-1 $\pm$ 2.77e-2 \\
        $14$ & $1.0$ & 0.014 & 1.296 & 6.20e-2 $\pm$ 1.28e-2 & 1.25e-1 $\pm$ 2.15e-2 \\
        \bottomrule
    \end{tabular}
\end{table}

\subsection{Systemic Risk Problem}
\label{sec:exp_sr1}

We next consider the systemic-risk model, in which each bank's log-monetary reserve $X_t$ follows~\eqref{eq:Sysrisk_state} and minimizes the social cost~\eqref{eq:Sysrisk_cost}. The task is parameterized by $\theta=(a,q,\varepsilon,c,\sigma)$, where $a$ is the mean-reversion rate, $q$ the control--deviation coupling, $\varepsilon$ and $c$ the running and terminal penalties, and $\sigma$ the idiosyncratic volatility ($\gamma=\sigma^2/2$). During training, $\theta$ is sampled log-uniformly from $[\theta_{\rm base}/3,\,3\,\theta_{\rm base}]$ with $\theta_{\rm base}=\{a=0.1,\,q=0.5,\,\varepsilon=1,\,c=1,\,\sigma=0.5\}$ and $q$ clipped to $0.95\sqrt{\varepsilon}$ to ensure well-posedness ($q<\varepsilon^2$). The closed-form optimal solution is summarized in~\ref{sec:appendix_exact_sr}.

The operator is conditioned directly on the parameter vector $\theta_{\rm base}$ through the conditioning encoder of Section~\ref{sec:arch}. Figure~\ref{fig:sr_fields} compares the learned solution with the analytical reference. The recovered score accurately matches the exact Gaussian score, while the predicted particle distribution follows the contracting Gaussian density throughout the evolution. Figures~\ref{fig:sr_sweep_lo} and~\ref{fig:sr_sweep_hi} further illustrate zero-shot generalization: varying the volatility parameter $\sigma$ changes both the density evolution and the score as expected, without retraining. 

Table~\ref{tab:sr_error} reports quantitative results obtained by varying one component of $\theta$ across its training range while fixing the remaining parameters at $\theta_{\rm base}$. The operator is robust against variations in $a$, $q$, $\varepsilon$, and $\sigma$, with trajectory errors remaining around $10^{-3}$ and score errors around $10^{-2}$. The terminal penalty $c$ has the strongest influence, approximately doubling both errors and producing noticeably larger variability. This suggests that $c$ is the most challenging parameter to amortize and would benefit from denser sampling during training.

\begin{table}[!htpb]
    \centering
    \caption{Systemic risk vs.\ the closed-form Riccati/Gaussian reference:
    per-task inference time and the relative errors~\eqref{equ:err_traj}
    and~\eqref{equ:err_score}.
    Each row sweeps one coordinate of the cost vector
    $\theta=(a,q,\varepsilon,c,\sigma)$ log-uniformly over its training range,
    with the other four held at $\theta_{\rm base}=\{a=0.1,\,q=0.5,\,\varepsilon=1,\,c=1,\,\sigma=0.5\}$;
    All results are from the same trained solution operator network, so inference time is constant.}
    \label{tab:sr_error}
    \begin{tabular}{ccccc}
        \toprule
         & \multicolumn{2}{c}{\textbf{Inference Time (s)}}
            & \multicolumn{2}{c}{\textbf{Relative error}} \\
        \cmidrule(lr){2-3}\cmidrule(lr){4-5}
        \textbf{Parameters} & transport & $+$\,score
            & trajectory & score \\
        \midrule
        $a\in[0.033,\,0.300]$ & 0.012 & 0.901 & 6.36e-3 $\pm$ 2.15e-3 & 5.49e-2 $\pm$ 1.76e-2 \\
        $q\in[0.167,\,0.950]$ & 0.012 & 0.901 & 6.44e-3 $\pm$ 2.33e-3 & 5.56e-2 $\pm$ 1.76e-2 \\
        $\varepsilon\in[0.333,\,3.000]$ & 0.012 & 0.901 & 6.60e-3 $\pm$ 2.57e-3 & 5.56e-2 $\pm$ 1.85e-2 \\
        $c\in[0.333,\,3.000]$ & 0.012 & 0.901 & 1.29e-2 $\pm$ 8.19e-3 & 1.17e-1 $\pm$ 5.18e-2 \\
        $\sigma\in[0.167,\,1.500]$ & 0.012 & 0.901 & 6.76e-3 $\pm$ 3.23e-3 & 5.23e-2 $\pm$ 1.88e-2 \\
        \bottomrule
    \end{tabular}
\end{table}

\begin{figure}[!htpb]
    \centering
    \setlength{\abovecaptionskip}{3pt}
    \includegraphics[width=\linewidth]{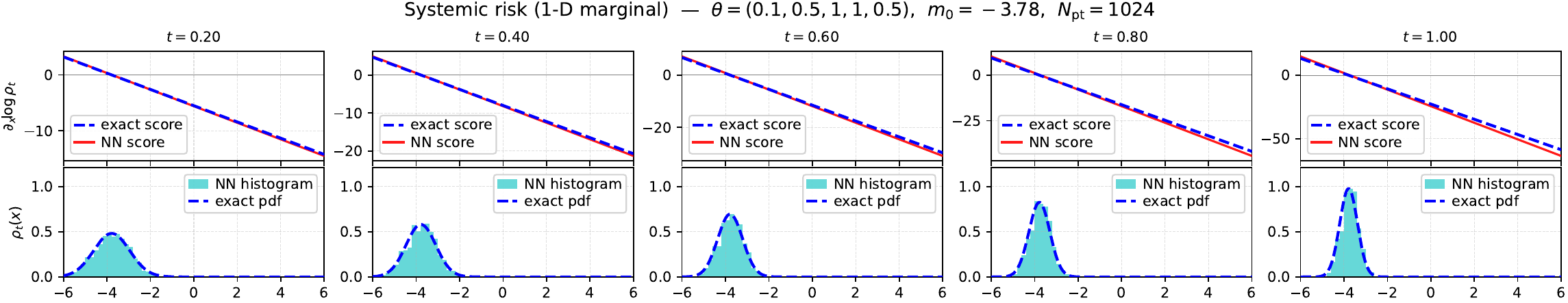}\\[3pt]
    \includegraphics[width=\linewidth]{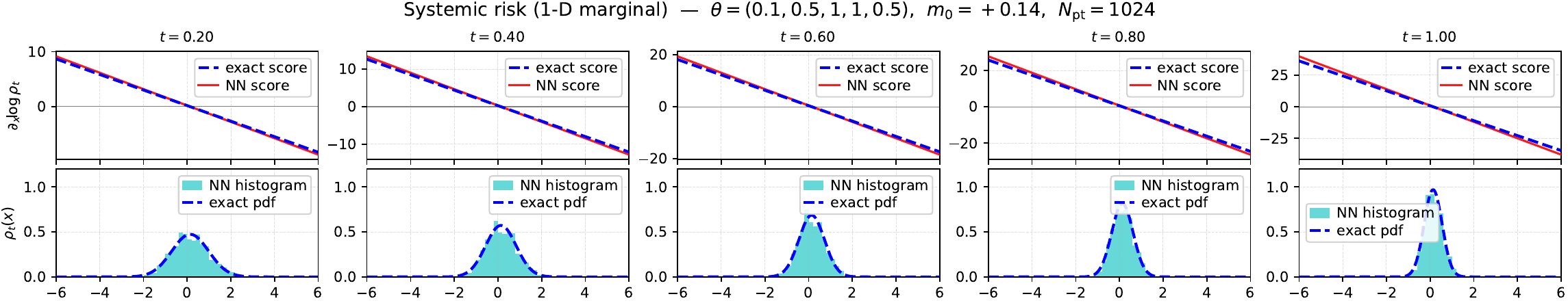}\\[3pt]
    \includegraphics[width=\linewidth]{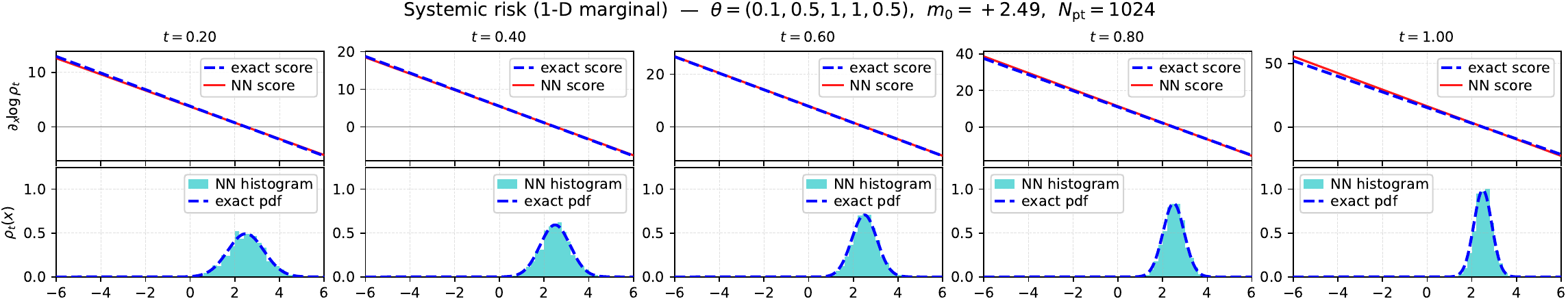}
    \caption{Systemic risk at the base cost $\theta_{\rm base}=\{a=0.1,\,q=0.5,\,\varepsilon=1,\,c=1,\,\sigma=0.5\}$, three
    tasks with initial mean $m_0\in\{-3.8,\, 0.1,\, 2.5\}$
    (top/middle/bottom strips). }
    \label{fig:sr_fields}
\end{figure}

\begin{figure}[!htpb]
    \centering
    \setlength{\abovecaptionskip}{3pt}
    \includegraphics[width=\linewidth]{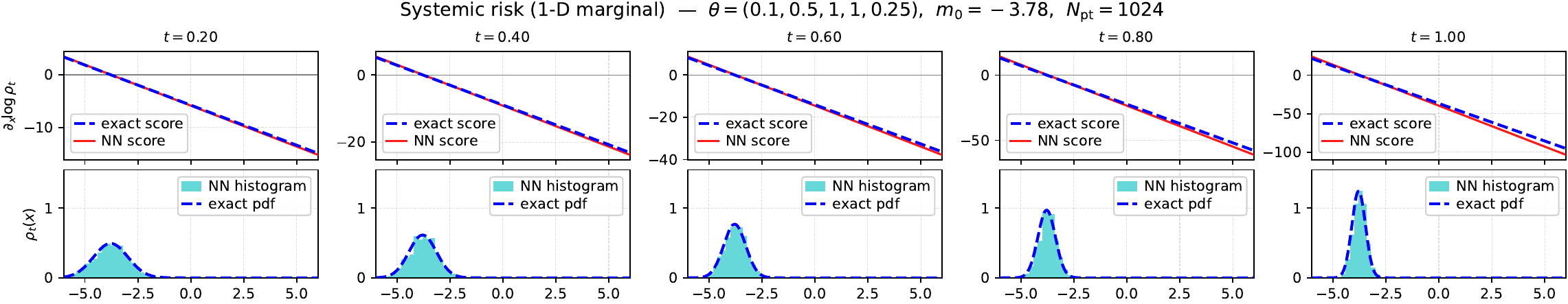}\\[3pt]
    \includegraphics[width=\linewidth]{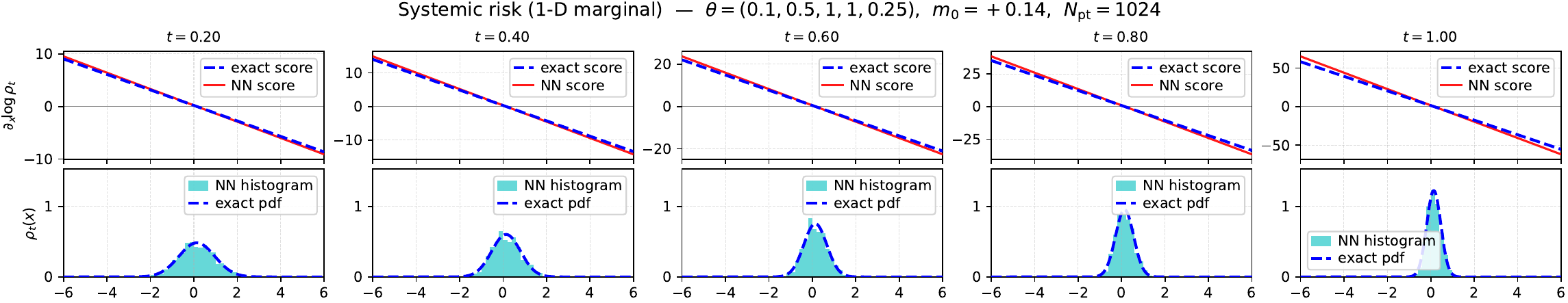}\\[3pt]
    \includegraphics[width=\linewidth]{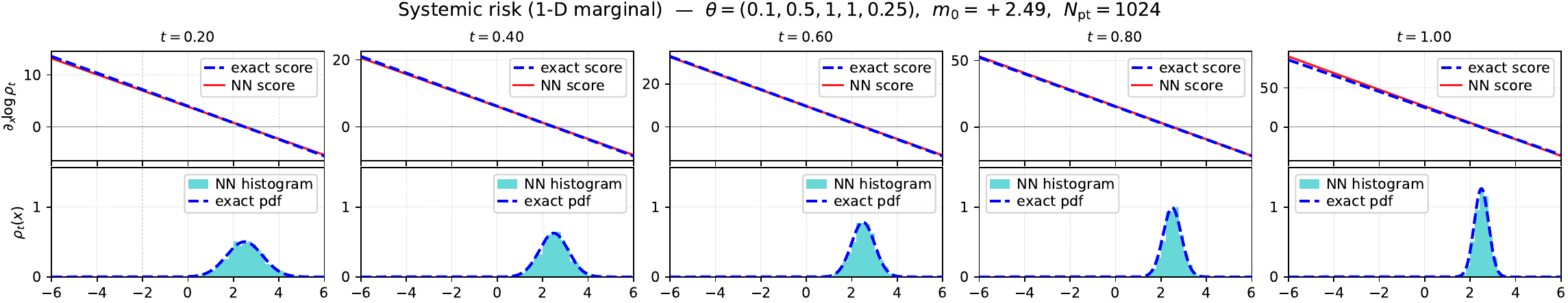}
    \caption{The same operator prompted zero-shot with the low-volatility cost
    $\sigma=0.25$, on the tasks and panels of Figure~\ref{fig:sr_fields}: in
    every task the density concentrates strongly and the score steepens.
    Contrast the high-volatility prompt of Figure~\ref{fig:sr_sweep_hi}.}
    \label{fig:sr_sweep_lo}
\end{figure}

\begin{figure}[!htpb]
    \centering
    \setlength{\abovecaptionskip}{3pt}
    \includegraphics[width=\linewidth]{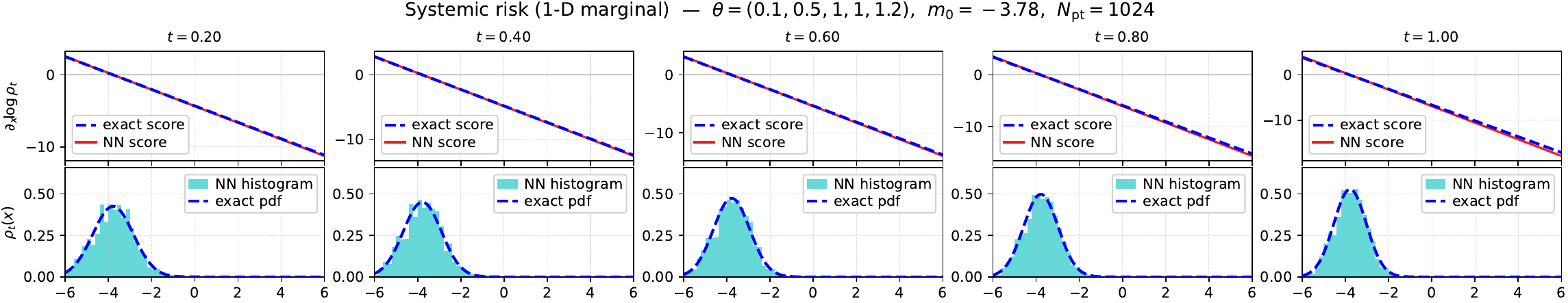}\\[3pt]
    \includegraphics[width=\linewidth]{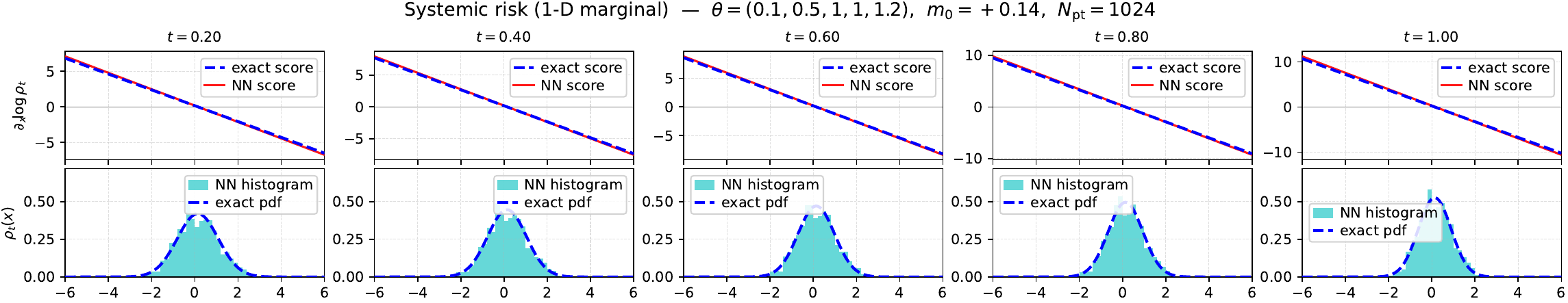}\\[3pt]
    \includegraphics[width=\linewidth]{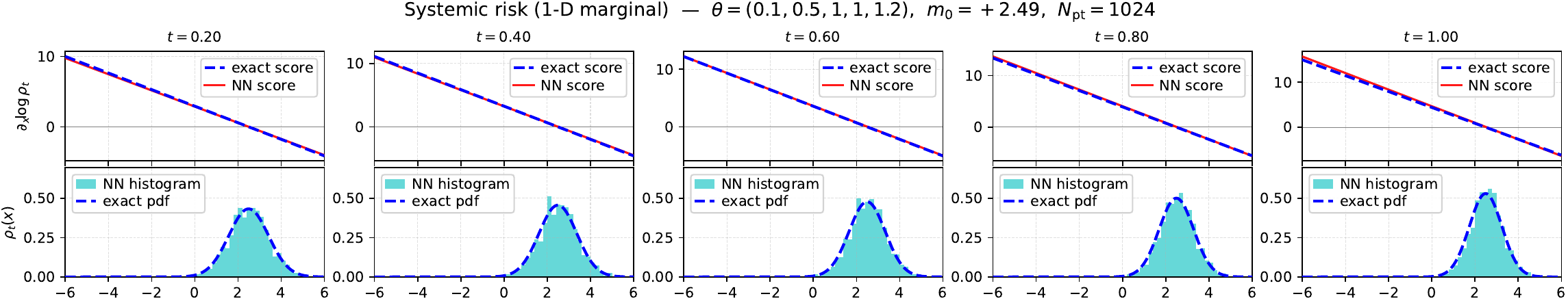}
    \caption{The same operator prompted with the high-volatility cost
    $\sigma=1.2$, on the tasks and panels of Figure~\ref{fig:sr_fields}: with
    no retraining, the density stays markedly wider and the score gentler,
    exactly as the variance ODE prescribes.}
    \label{fig:sr_sweep_hi}
\end{figure}

\subsection{Obstacle-Avoiding Path Planning: Task Generalization}
\label{sec:exp_pp_task}

We finally consider the obstacle-avoiding path-planning problem of Section~\ref{sec:mfc}, where each task is specified by the initial and terminal distributions together with an obstacle field, $\sT=(P_0,P_T,Q)$. Each obstacle is Gaussian, so the field is the thresholded mixture
\begin{align}
    Q(x)=\sum_{j=1}^{n_{\rm obs}}
    \max\bigl\{\,\sN(x;c_j,\Sigma_j)-\tau,\;0\,\bigr\},
    \label{equ:obstacle_field}
\end{align}
where $\sN(\cdot\,;c_j,\Sigma_j)$ denotes the Gaussian density with center $c_j$ and covariance $\Sigma_j$, and $\tau=10^{-3}$ truncates the tails so that each obstacle penalizes only its own neighborhood rather than the whole domain. The interaction term of Section~\ref{sec:mfc} is the induced occupancy cost $I(p(\cdot,t))=\int_\Omega Q(x)\,p(x,t)\dd x$, accumulated over the time grid. We use $n_{\rm obs}=3$ obstacles, with centers and covariances resampled per task. With the cost weights fixed, the operator is evaluated on unseen obstacle configurations, requiring a single network to adapt its transport to different environments through the task prompt alone.

Figure~\ref{fig:pp_task} illustrates zero-shot generalization on tasks. The learned transport successfully routes the particle population around the prompted obstacles while reaching the prescribed terminal distribution, without retraining. The predicted trajectories are further refined by a lightweight quadratic-programming projection based on control barrier functions, which minimally adjusts the transport to satisfy the prescribed $3\sigma$ keep-out constraints. 
\begin{figure}[!ht]
    \centering
    \includegraphics[width=0.185\linewidth]{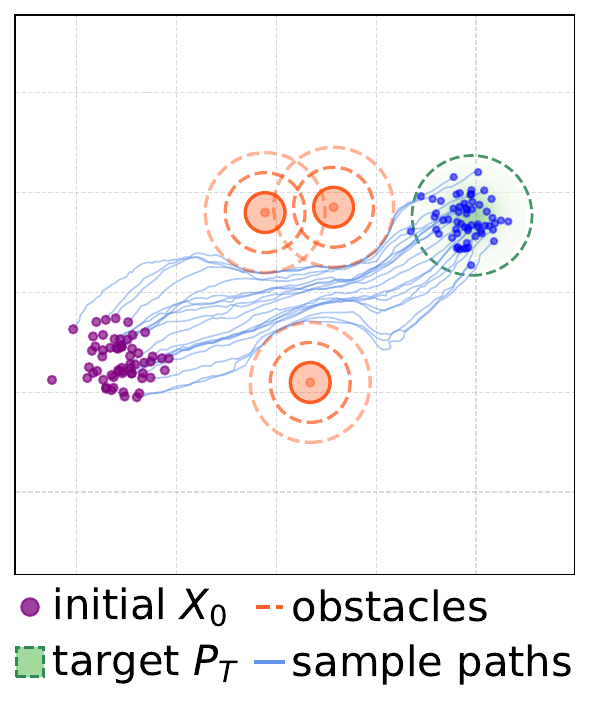}
    \includegraphics[width=0.185\linewidth]{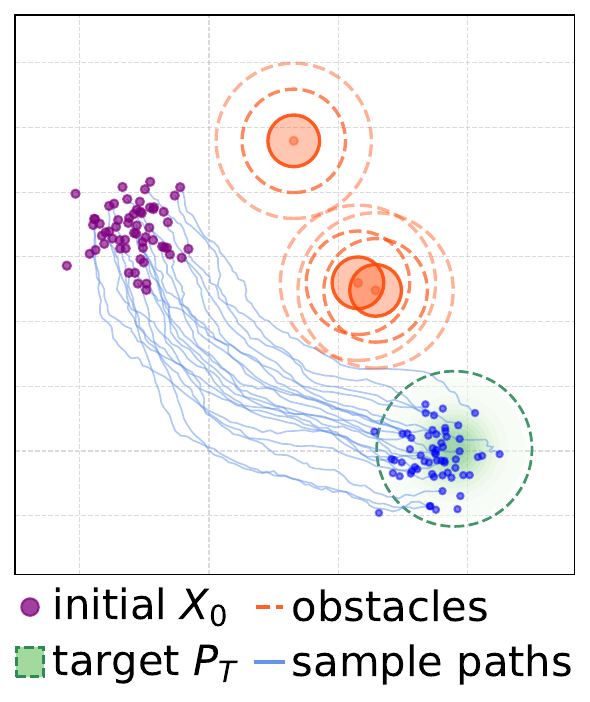}
    \includegraphics[width=0.185\linewidth]{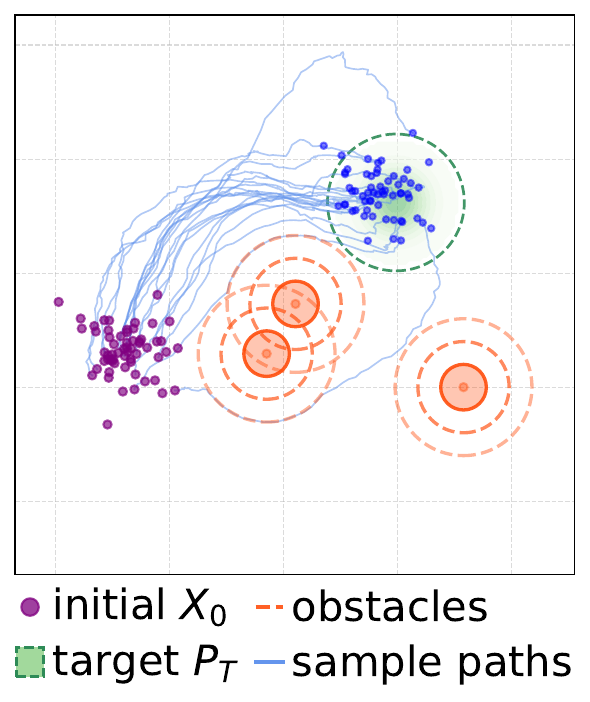}
    \includegraphics[width=0.185\linewidth]{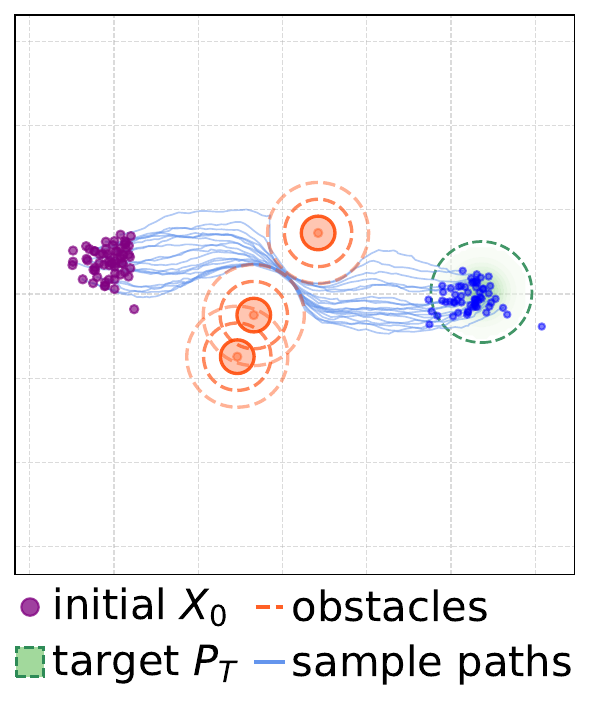}
    \includegraphics[width=0.185\linewidth]{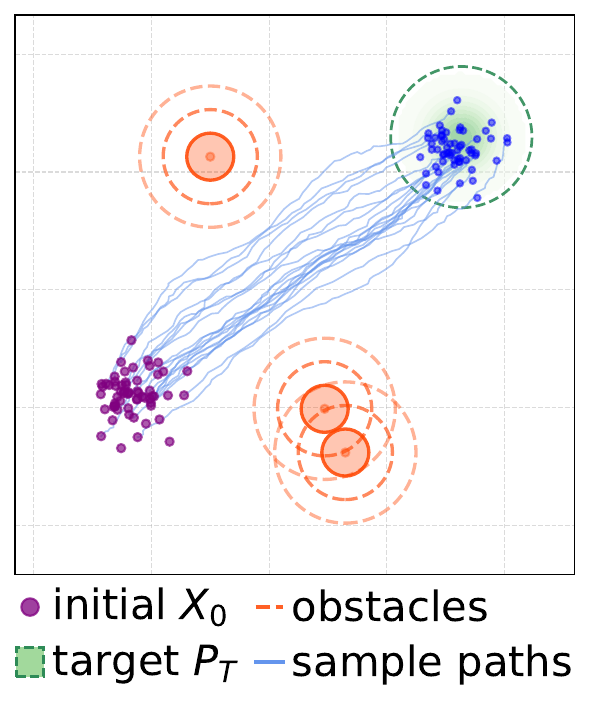}
    \\
    \includegraphics[width=0.185\linewidth]{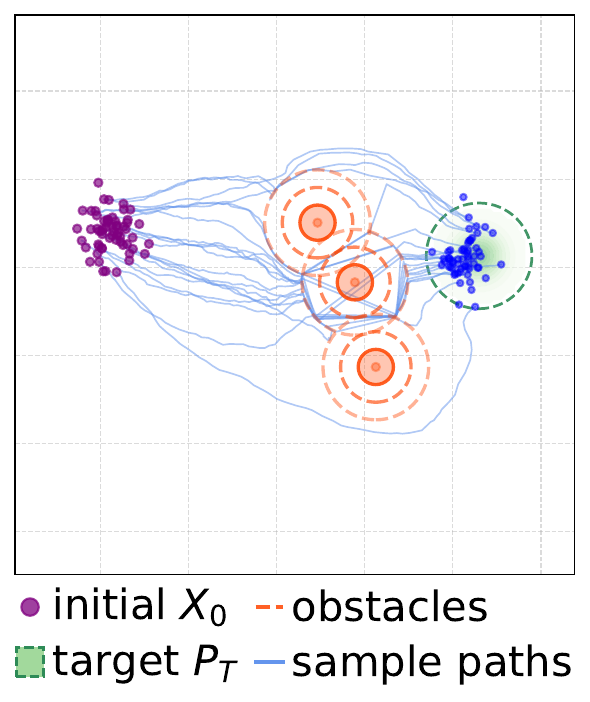}
    \includegraphics[width=0.185\linewidth]{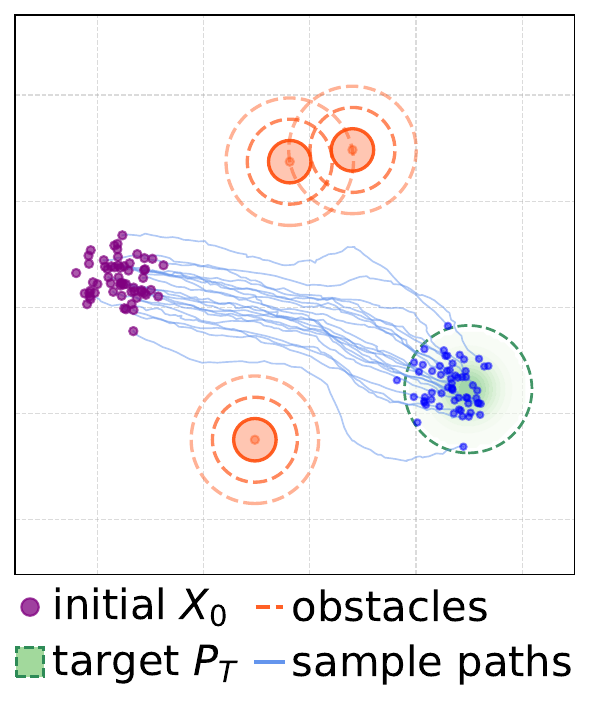}
    \includegraphics[width=0.185\linewidth]{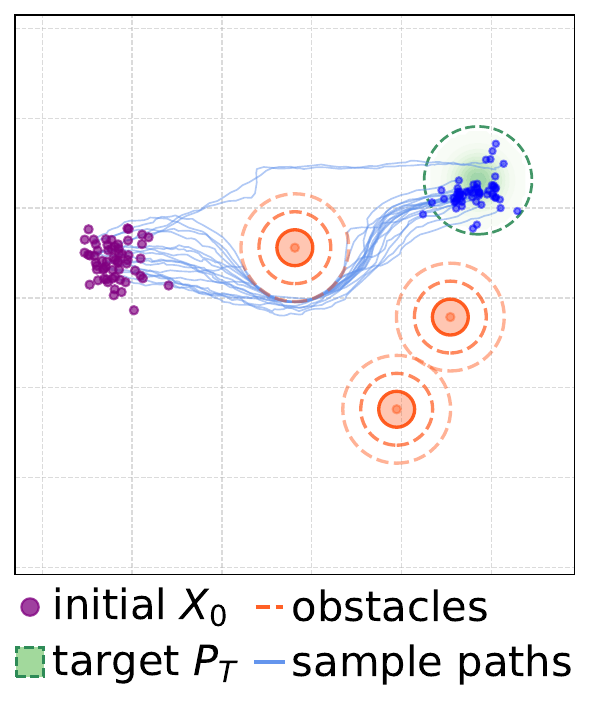}
    \includegraphics[width=0.185\linewidth]{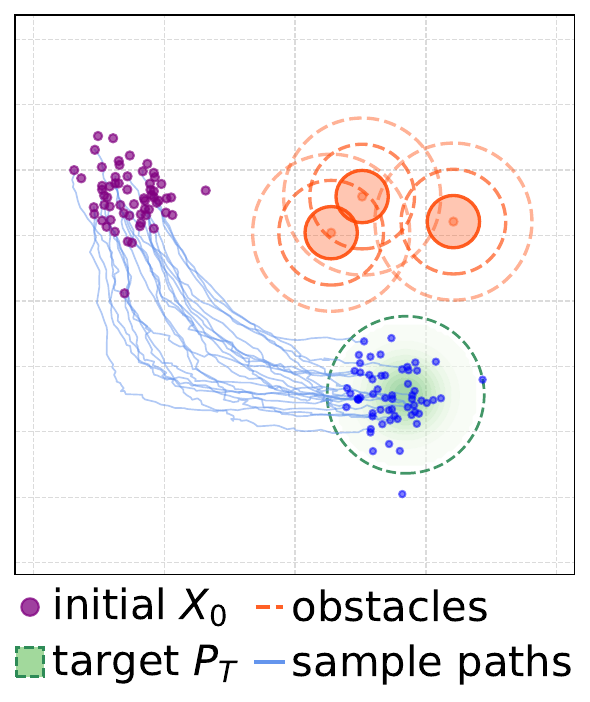}
    \includegraphics[width=0.185\linewidth]{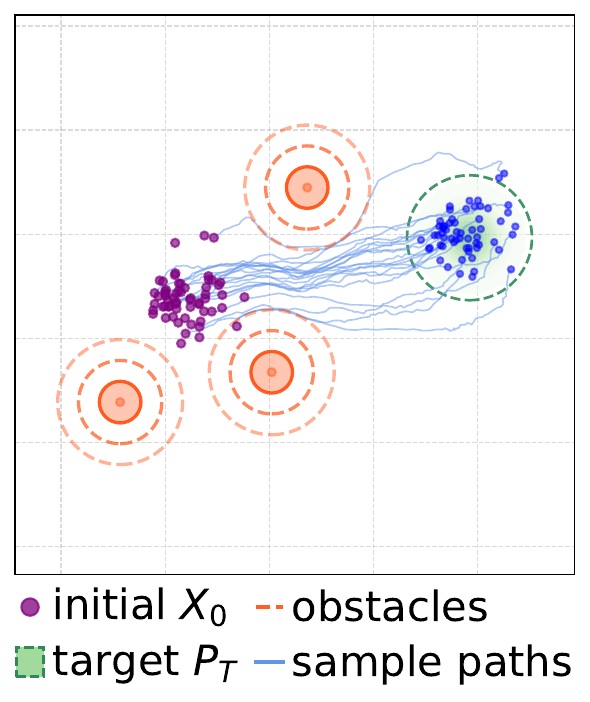}
    \\
    \includegraphics[width=0.185\linewidth]{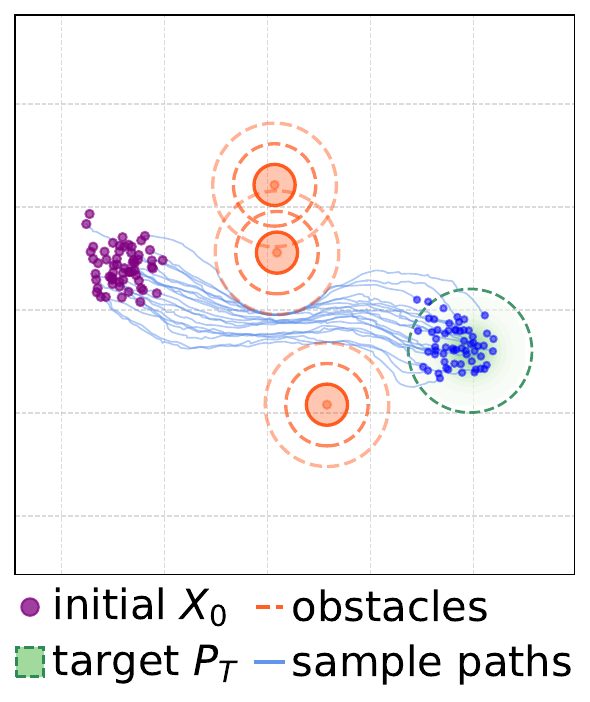}
    \includegraphics[width=0.185\linewidth]{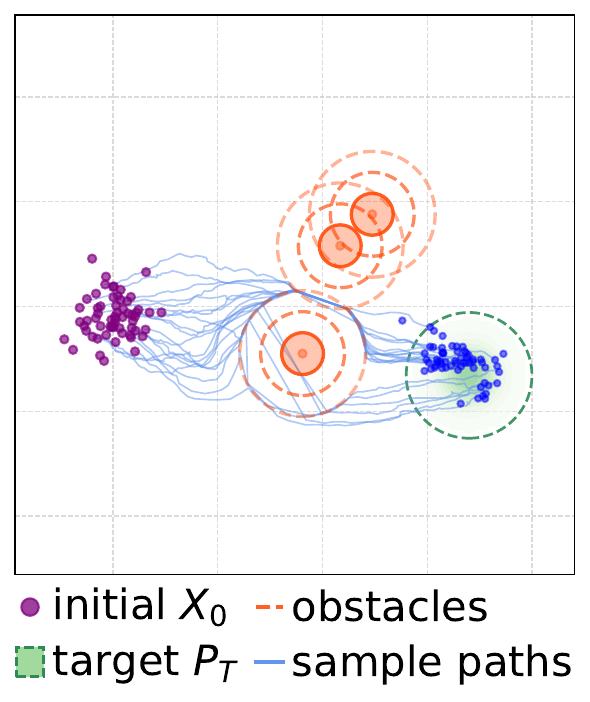}
    \includegraphics[width=0.185\linewidth]{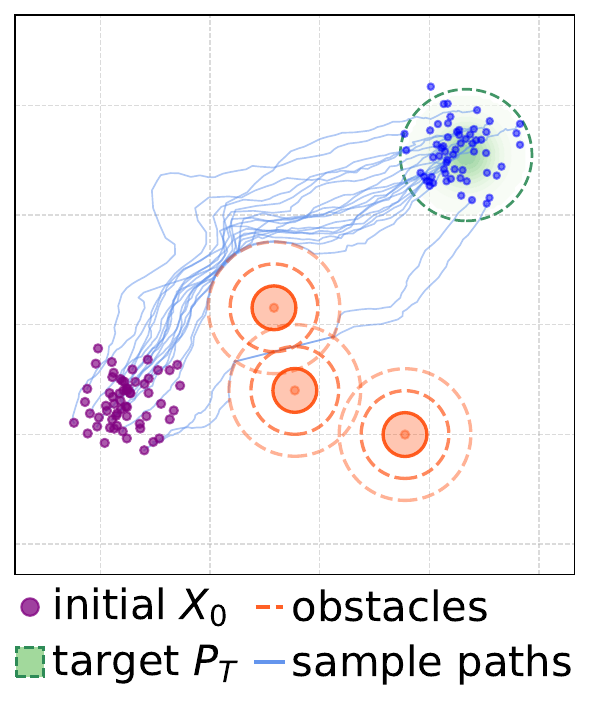}
    \includegraphics[width=0.185\linewidth]{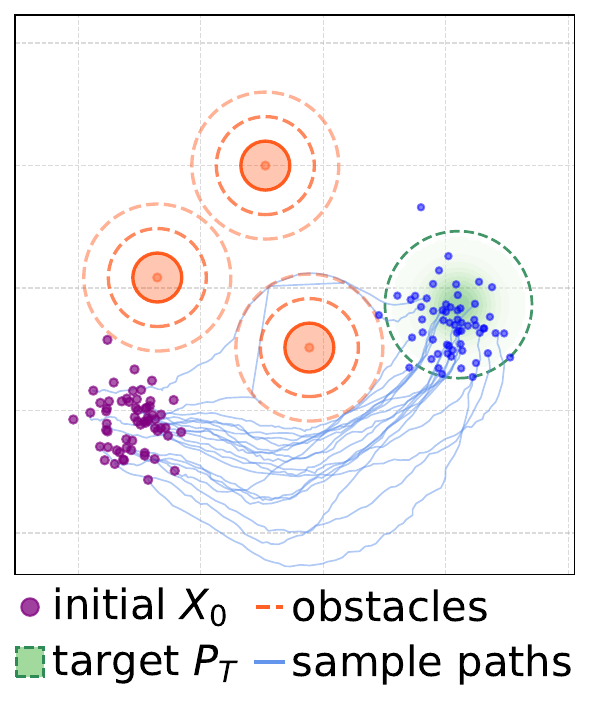}
    \includegraphics[width=0.185\linewidth]{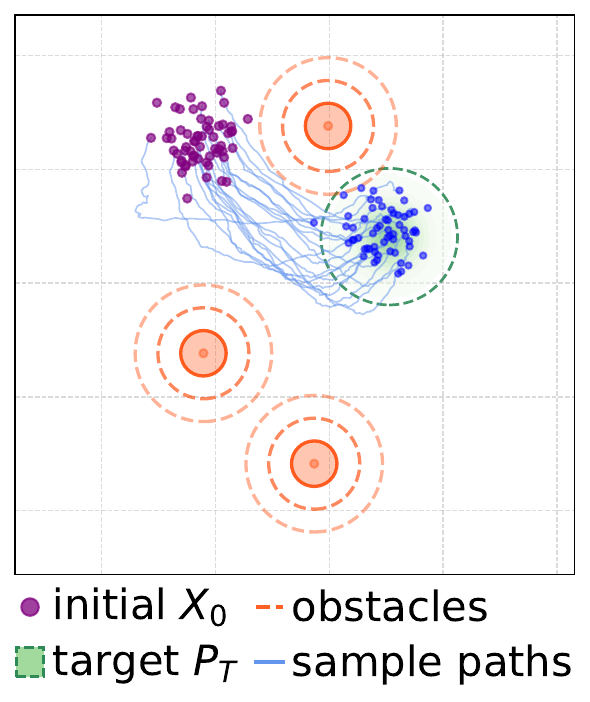}
    \caption{One path-planning solution operator, zero-shot on 
    configurations: initial distribution $P_0$ (purple), target distribution $P_T$ (green, dashed $3\sigma$), obstacles (orange rings). 
    Blue curves are Euler--Maruyama sample
    paths of the controlled diffusion~\eqref{equ:sde} at the trained $\g=0.1$; the quadratic programming projection enforces the $3\sigma$ keep-out.}
    \label{fig:pp_task}
\end{figure}

\subsection{Inference Cost and Dimension Scaling}
\label{sec:exp_complexity}

In Section~\ref{sec:arch}, we argue that transport, the Jacobian log-determinant and the analytic score each cost $\mathcal O(d)$ per particle and per time node, against the $\mathcal O(d^3)$ of a generic flow map. The experiments above span $d=1$--$14$, a range over which those three rates are not separable from a constant, so we measure the cost directly over five orders of magnitude in $d$.

This inference cost test is purely architectural. We evaluate an untrained solution-operator network with the same architecture as the multi-well operator in Section~\ref{sec:exp_soc_mw}, varying only the state dimension \(d\). This test can empirically assess the predicted  $\mathcal O(d)$ scaling of the inference cost with respect to dimension. Table~\ref{tab:dim_timing} and Figure~\ref{fig:dim_timing} report the result: both costs are constant while the $d$-independent work dominates and strictly linear thereafter, never approaching the quadratic or cubic guides. We end the experiment at $d=262144$ where the $24$~GB GPU VRAM is exhausted. 

This experiment does not imply that a network of fixed size has sufficient approximation capacity to solve MFC problems in arbitrarily high dimensions. Characterizing the network capacity required as the dimension and problem complexity increase is a separate question related to approximation and generalization theory, and is beyond the scope of the present computational scaling experiment. We leave a systematic investigation of this question to future work. 

\begin{table}[htpb]
    \centering
    \caption{Per-task inference cost of the multi-well operator versus state
    dimension, on untrained (randomly initialized) models. ``transport'' is
    one batched rollout over all $n_{\rm time}$ nodes and ``score'' is one
    analytic-score evaluation at each node. Everything but $d$ is fixed:
    $B=1$, $N_{\rm pt}=100$, $n_{\rm time}=32$.}
    \label{tab:dim_timing}
    \begin{tabular}{ccccc}
        \toprule
         & \multicolumn{2}{c}{\textbf{Inference Time (s)}}
            & \textbf{Peak memory} & \textbf{Parameters} \\
        \cmidrule(lr){2-3}
        \textbf{dim} & transport & score & (GB) & ($10^6$) \\
        \midrule
        $2$ & 0.0095 & 0.6302 & 0.06 & 4.55 \\
        $4$ & 0.0095 & 0.6177 & 0.06 & 4.56 \\
        $8$ & 0.0095 & 0.6192 & 0.06 & 4.56 \\
        $16$ & 0.0095 & 0.6184 & 0.06 & 4.58 \\
        $32$ & 0.0095 & 0.6197 & 0.06 & 4.61 \\
        $64$ & 0.0104 & 0.6859 & 0.06 & 4.66 \\
        $128$ & 0.0106 & 0.6888 & 0.06 & 4.77 \\
        $256$ & 0.0106 & 0.6836 & 0.06 & 4.98 \\
        $512$ & 0.0106 & 0.6898 & 0.08 & 5.41 \\
        $1024$ & 0.0107 & 0.6883 & 0.12 & 6.27 \\
        $2048$ & 0.0097 & 0.6945 & 0.20 & 7.99 \\
        $4096$ & 0.0167 & 0.6147 & 0.36 & 11.43 \\
        $8192$ & 0.0329 & 0.6704 & 0.68 & 18.31 \\
        $16384$ & 0.0644 & 0.6728 & 1.32 & 32.08 \\
        $32768$ & 0.1294 & 0.6730 & 2.60 & 59.60 \\
        $65536$ & 0.2745 & 0.8472 & 5.16 & 114.65 \\
        $131072$ & 0.5874 & 2.0654 & 10.29 & 224.75 \\
        $262144$ & 1.2402 & 4.4567 & 20.54 & 444.95 \\
        \bottomrule
    \end{tabular}
\end{table}

\begin{figure}[!htpb]
    \centering
    \includegraphics[width=0.62\linewidth]{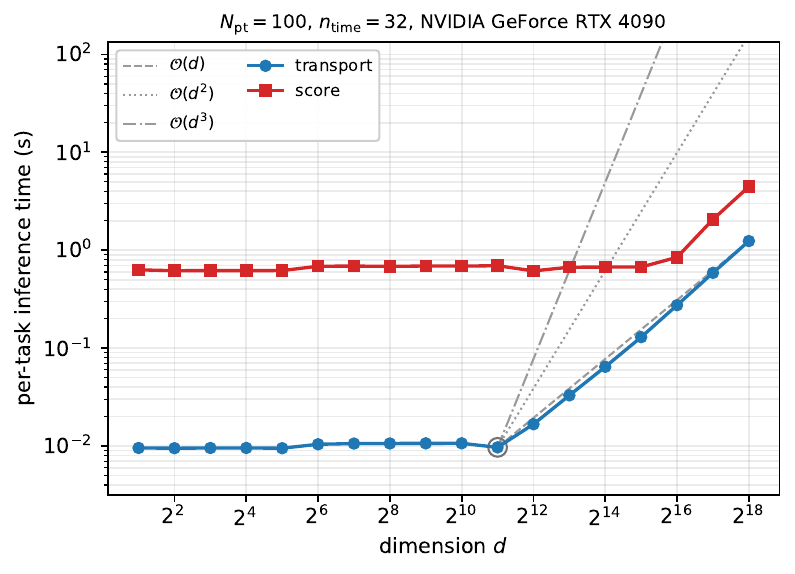}
    \caption{Per-task inference cost versus dimension for the multi-well
    operator, at $B=1$, $N_{\rm pt}=100$ and $n_{\rm time}=32$. The grey guides
    are $\mathcal O(d)$, $\mathcal O(d^2)$ and $\mathcal O(d^3)$, anchored at the
    transport measurement at $d=2048$ (circled), where growth begins, so that
    each guide is compared with the data over the range in which an asymptotic
    rate is claimed.}
    \label{fig:dim_timing}
\end{figure}

\section{Conclusions}\label{sec:conclusion}
We develop a self-supervised operator-learning framework for solving families of stochastic mean-field control problems. By reformulating the Fokker--Planck equation as the probability flow ODE, we represent the density evolution by a deterministic transport map while recovering the score required by the original mean-field control problem. To make this representation computationally tractable, we introduced NFIST, a task-conditioned invertible architecture that combines coupling-based normalizing flows with transformer conditioning. Its exact inverse and analytic log-determinant enable efficient score evaluation, while parametric and particle-cloud prompts allow a single trained network to act as a solution operator over previously unseen tasks. Training is fully self-supervised through the stochastic control objective and requires no reference solutions. Numerical experiments across stochastic optimal control, Schr\"odinger bridges, systemic risk, and obstacle-avoiding path planning demonstrate accurate generalization across distributions, model parameters, state dimensions, and task configurations. These results establish invertible probability-flow learning as an effective framework for learning solution operators for stochastic mean-field control.

\section*{Declaration of AI-assisted technologies in the manuscript preparation process}
During the preparation of this work the authors used Claude (Anthropic) and ChatGPT (OpenAI) in order to assist with the aspects of the numerical implementation and to improve the language and readability of the manuscript. After using this tool, the authors reviewed and edited the content as needed and take full responsibility for the content of the published article.

{\footnotesize
\bibliographystyle{elsarticle-num}
\bibliography{ref}
}

\appendix

\section{Proofs}\label{sec:appendix_proofs}
\subsection{Proof of Lemma~\ref{lem:prob_flow}}
\label{sec:proof_prob_flow}

The density $p$ of the SDE~\eqref{equ:sde} satisfies the Fokker--Planck equation
\begin{align}
    \partial_t p+\nabla\!\cdot(pv)=\g\Delta p,
    \qquad
    p(\cdot,0)=p_0.
    \label{equ:proof_fp}
\end{align}
Using $f=v-\g\nabla\log p$ and the identity $p\nabla\log p=\nabla p$, equation~\eqref{equ:proof_fp} becomes
\begin{align*}
    0
    &=\partial_t p+\nabla\!\cdot
      \bigl(p(f+\g\nabla\log p)\bigr)-\g\Delta p \\
    &=\partial_t p+\nabla\!\cdot(pf).
\end{align*}
Thus, $p$ satisfies the continuity equation associated with the probability-flow ODE
\[
    \partial_t x_t=f(x_t,t),
    \qquad
    x_0\sim P_0.
\]
The density $\tilde p$ of this ODE satisfies the same continuity equation and initial condition. By uniqueness of its solution, $\tilde p(\cdot,t)=p(\cdot,t)$ for all $t\in[0,T]$.
\qed


\subsection{Proof of Lemma~\ref{lem:score_pushforward}}
\label{sec:proof_score_pushforward}

Let $x_t=G(x,t)$. Since $G(\cdot,t)_\#p_0=p(\cdot,t)$, the change-of-variables formula gives
\begin{align}
    \log p_0(x)
    =
    \log p(G(x,t),t)
    +
    \log\left|\det\boldsymbol{\nabla}_xG(x,t)\right|.
    \label{equ:proof_change_variables}
\end{align}
Differentiating with respect to $x$ and applying the chain rule yields
\begin{align*}
    s_0(x)
    &=
    \boldsymbol{\nabla}_xG(x,t)^{\mathsf T}
    \mathbf{s}(G(x,t),t)
    +
    \nabla_x
    \log\left|\det\boldsymbol{\nabla}_xG(x,t)\right|.
\end{align*}
Because $G(\cdot,t)$ is a diffeomorphism, $\boldsymbol{\nabla}_xG(x,t)$ is invertible. Therefore,
\begin{align*}
    \mathbf{s}(G(x,t),t)
    =
    \boldsymbol{\nabla}_xG(x,t)^{-\mathsf T}
    \left[
        s_0(x)
        -
        \nabla_x
        \log\left|\det\boldsymbol{\nabla}_xG(x,t)\right|
    \right].
\end{align*}
\qed

\section{Exact Solutions}\label{sec:appendix_exact_sol}

\subsection{Stochastic Optimal Control with Gaussian Data}
\label{sec:appendix_soc}

We consider the stochastic optimal control problem
\begin{align*}
J(v)
&=
\mathbb E\!\left[
\frac12\int_0^1\|v(X_t,t)\|^2\,\dd t
+
\frac12\|X_1-m_1\|^2
\right]\\
\rd X_t&=v(X_t,t)\,\rd t+\sqrt{2\gamma}\,\rd W_t,
\qquad
X_0\sim\mathcal N(m_0,aI),
\end{align*}
This problem admits the explicit optimal control $v^*(x,t)=\frac{m_1-x}{2-t}$, under which the state remains Gaussian $X_t\sim\mathcal N(\mu(t),\Sigma(t))$, with
\[
\mu(t)=\frac{2-t}{2}m_0+\frac{t}{2}m_1,\quad 
\Sigma(t)=s(t)I,\quad s(t)=\frac{(2-t)^2}{4}a+\gamma\,t(2-t).
\]
The probability-flow map transporting the initial distribution to the marginal at time \(t\) is
\[
G(x,t)
=
\alpha(t)x
+
\mu(t)
-
\alpha(t)m_0,
\qquad
\alpha(t)
=
\sqrt{\frac{s(t)}{a}},
\]
which satisfies
\[
G(\cdot,t)_\#\mathcal N(m_0,aI)
=
\mathcal N(\mu(t),\Sigma(t)).
\]

\subsection{Multi-Well Stochastic Optimal Control}\label{sec:appendix_soc_mw}

The experiment of Section~\ref{sec:exp_soc_mw} replaces the quadratic terminal cost of \ref{sec:appendix_soc} by the soft-min potential \(V(x)=-\log\sum_{i=1}^{n}e^{-\norm{x-\m_i}^2}\), for which no closed form is available. The control problem
\[
\min_v\;
\E\left[\tfrac12\int_0^1\norm{v(X_t,t)}^2\dd t+\tfrac12 V(X_1)\right],
\qquad
\rd X_t=v(X_t,t)\,\rd t+\sqrt{2\g}\,\rd W_t,
\quad X_0\sim P_0,
\]
is nevertheless linearly solvable. Its Hamilton--Jacobi--Bellman equation for the value function $\psi$, $\partial_t\psi-\tfrac12\norm{\nabla\psi}^2+\g\Delta\psi=0$ with $\psi(\cdot,1)=\tfrac12 V$, linearizes under the Hopf--Cole transform $\psi=-2\g\log\eta$ \cite{fleming2006controlled,kappen2005linear} into a backward heat equation for
the desirability $\eta$,
\[
\partial_t\eta+\g\Delta\eta=0,
\qquad
\eta(\cdot,1)=e^{-V/(4\g)},
\]
whose solution is the Gaussian convolution
\begin{align}
\eta(x,t)
=
\qb{4\pi\g(1-t)}^{-d/2}
\int_{\R^d}
\exp\qB{-\frac{\norm{x-y}^2}{4\g(1-t)}-\frac{V(y)}{4\g}}\dd y
=
\E_{Z\sim\sN\qb{0,\,2\g(1-t)I}}\left[e^{-V(x+Z)/(4\g)}\right].
\label{equ:hopfcole_eta}
\end{align}
The optimal control is $v^*=-\nabla\psi=2\g\nabla\log\eta$, and the controlled marginal factorizes as $p_t=\eta(\cdot,t)\,\hat\eta(\cdot,t)$, where $\hat\eta$ solves the forward heat equation~\cite{daipra1991stochastic,chen2021liaisons}
\[
\partial_t\hat\eta=\g\Delta\hat\eta,
\qquad
\hat\eta(\cdot,0)=p_0/\eta(\cdot,0),
\qquad\text{that is,}\quad
\hat\eta(\cdot,t)=\E_{Z\sim\sN\qb{0,\,2\g t I}}\left[\hat\eta(\cdot+Z,0)\right].
\]
Both fields are thus Gaussian smoothings of fixed data, and $\nabla\log p_t=\nabla\log\eta+\nabla\log\hat\eta$.

Because $V/(4\g)$ carries a non-integer power of the mixture, the two convolutions are evaluated numerically on a $161^2$ tensor grid, in the log domain by a separable log-sum-exp that stays stable at small $\g$, with second-order central differences for the gradients. Reference trajectories follow the probability flow ODE of Lemma~\ref{lem:prob_flow},
\begin{align}
\dot x^*=v^*(x^*,t)-\g\nabla\log p_t(x^*),
\label{equ:hopfcole_pfode}
\end{align}
integrated with six Euler substeps per recording interval. Two reductions validate the solver.

\subsection{Explicit Solution for Schr\"odinger Bridges with Gaussian Endpoints}
\label{sec:appendix_sb}

The Schr\"odinger bridge seeks the most likely diffusion connecting two prescribed endpoint distributions. Throughout this paper we consider the Brownian reference process
\[
\rd X_t=\sqrt{2\gamma}\,\rd W_t,
\]
where $\gamma>0$ is the diffusion coefficient. By Girsanov's theorem (see, e.g.,~\cite{chen2016relation}), the problem is equivalent to
\begin{align}
\min_{(p,f)}
\int_0^1\!\!\int_{\mathbb R^d}
\left[
\frac12\|f(x,t)\|^2
+\frac{\gamma^2}{2}
\|\nabla\log p(x,t)\|^2
\right]
p(x,t)\,\dd x\,\dd t,
\end{align}
subject to the continuity equation
\[
\partial_tp+\nabla\!\cdot(p f)=0,
\qquad
p(\cdot,0)=p_0,\;
p(\cdot,1)=p_1.
\]

For isotropic Gaussian marginals $P_0=\mathcal N(m_0,aI),    P_1=\mathcal N(m_1,bI)$, the optimal solution admits a closed form $P_t=\mathcal N(\mu(t), \Sigma(t))$~\cite{chen2016relation}. Define $c_0= 1+\frac{\gamma}{a}-\frac1a\sqrt{\gamma^2+ab}$. Then
\[
\mu(t)=(1-t)m_0+tm_1, \qquad 
\Sigma(t)
=
\Bigl(
(1-c_0t)^2a
+2\gamma(1-c_0t)t
\Bigr)I.
\]
The corresponding probability-flow velocity field is
\[
f(x,t)
=
\frac{m_1-(1-c_0)m_0}{1-c_0t}
-\frac{c_0}{1-c_0t}x
+\frac12\Sigma(t)^{-1}
\bigl(x-\mu(t)\bigr),
\]
which is used to generate the reference solutions in our numerical experiments.

\subsection{Systemic Risk}\label{sec:appendix_exact_sr}

We consider the systemic-risk example with dynamics 
$\dd X_t=[a(\bar m_t-X_t)+\alpha_t]\dd t+\sigma\dd W_t$
with $\bar m_t=\E[X_t]$, running cost 
$\tfrac12\alpha^2-q\,\alpha(\bar m-x)+\tfrac{\varepsilon}{2}(\bar m-x)^2$, 
and terminal cost $\tfrac{c}{2}(x-\bar m)^2$, in the diffusion convention $\g=\sigma^2/2$. The optimal control is the affine feedback
\[
\alpha^*(t,x) = (q+\eta_t)\,(\bar m_t - x),
\]
where $\eta_t$ solves the scalar Riccati equation
\[
\dot\eta_t = \eta_t^2 + 2(a+q)\,\eta_t - (\varepsilon - q^2),
\qquad \eta_T = c,
\]
available in closed form: with 
$\delta^{\pm}=-(a+q)\pm\sqrt{(a+q)^2+\varepsilon-q^2}$ 
and 
$E_t=e^{(\delta^+-\delta^-)(T-t)}$,
\[
\eta_t
= \frac{-(\varepsilon-q^2)\,(E_t-1) - c\,(\delta^+E_t-\delta^-)}
{(\delta^-E_t-\delta^+) - c\,(E_t-1)}.
\]
Averaging the closed-loop dynamics gives
$\partial_t\bar m_t=0$, so $\bar m_t=\bar m_0=\E_{\mu_0}[X]$, and the marginal remains Gaussian, $\sN(\bar m_0,\Sigma_t)$ per coordinate, with variance ODE
\[
\dot\Sigma_t = -2\kappa_t\,\Sigma_t + \sigma^2,
\qquad \kappa_t = a+q+\eta_t,
\qquad \Sigma_0 = \mathrm{Var}(\mu_0),
\]
whose integrating-factor solution $\Sigma_t = e^{-2K_t}\bigl(\Sigma_0+\sigma^2\int_0^t e^{2K_s}\dd s\bigr)$, $K_t=\int_0^t\kappa_s\dd s$, the implementation evaluates by high-resolution quadrature with the closed-form $\eta_t$; the reference score is $-(x-\bar m_0)/\Sigma_t$. These expressions provide the closed-form reference used in Section~\ref{sec:exp_sr1}; the one-dimensional integrals defining \(K_t\) and \(\Sigma_t\) are evaluated numerically.

\end{document}